\numberwithin{equation}{section}
\definecolor{darkblue}{rgb}{0,0,0.5}
\newdimen\margin
\def\textno#1&#2\par{
	\margin=\hsize
	\advance\margin by -4\parindent
	\setbox1=\hbox{\sl#1}
	\ifdim\wd1 < \margin
	$$\box1\eqno#2$$
	\else
	\bigbreak
	\hbox to \hsize{\indent$\vcenter{\advance\hsize by -3\parindent
			\it\noindent#1}\hfil#2$}
	\bigbreak
	\fi}
\newtheorem{theorem}[algorithm]{Theorem}
\newtheorem{prop}[algorithm]{Proposition}
\newtheorem{lemma}[algorithm]{Lemma}
\newtheorem{cor}[algorithm]{Corollary}
\theoremstyle{definition}
\newtheorem{problem}[algorithm]{Problem}
\newtheorem{defin}[algorithm]{Definition}
\def\lateproof#1{\removelastskip\penalty55\medskip\noindent\begin{stepenv}\end{stepenv}{\bf Proof of #1. }} 
\def\noproof{{\unskip\nobreak\hfill\penalty50\hskip2em\hbox{}\nobreak\hfill%
		$\square$\parfillskip=0pt\finalhyphendemerits=0\par}\goodbreak}
\def\endproof{\noproof\bigskip}
\newcounter{stepenv}
\newenvironment{stepenv}[1][]{\refstepcounter{stepenv}}{}
\newcounter{step}[stepenv]
\newcounter{substep}[step]
\renewcommand{\thesubstep}{\thestep.\arabic{substep}}
\newcounter{claim}[stepenv]
\newcommand{\cC}{\mathcal{C}}
\newcommand{\cD}{\mathcal{D}}
\newcommand{\cH}{\mathcal{H}}
\newcommand{\cL}{\mathcal{L}}
\newcommand{\cM}{\mathcal{M}}
\newcommand{\cR}{\mathcal{R}}
\newcommand{\bN}{\mathbb{N}}
\newcommand{\bR}{\mathbb{R}}
\def\eps{{\epsilon}}
\newcommand{\defn}{\emph}
\newcommand{\prob}[1]{\mathrm{\mathbb{P}}\left[#1\right]}
\newcommand{\expn}[1]{\mathrm{\mathbb{E}}\left[#1\right]}
\newcommand{\set}[2]{\{#1\,:\;#2\}}
\def\In{\subset}
\newcommand{\qc}{{\rm qc}}
\def\COMMENT#1{}
\def\TASK#1{}
\let\TASK=\footnote             
\def\x{1}
\begin{document}

	\title{The $n$-queens completion problem}

	\author{Stefan Glock \thanks{Institute for Theoretical Studies, ETH, 8092 Z\"urich, Switzerland.
			Email: \href{mailto:dr.stefan.glock@gmail.com}{\nolinkurl{dr.stefan.glock@gmail.com}}.
			\newline Research supported by Dr. Max R\"ossler, the Walter Haefner Foundation and the ETH Z\"urich Foundation.}
		\and 
		David Munh\'a Correia \thanks{Department of Mathematics, ETH, 8092 Z\"urich, Switzerland. \newline Email:
			\href{mailto:david.munhacanascorreia@math.ethz.ch} {\nolinkurl{david.munhacanascorreia@math.ethz.ch}}.}
		\and
		Benny Sudakov \thanks{Department of Mathematics, ETH, 8092 Z\"urich, Switzerland. Email:
			\href{mailto:benny.sudakov@gmail.com} {\nolinkurl{benny.sudakov@gmail.com}}.
			\newline Research supported in part by SNSF grant 200021\_196965.}
	}
	
	\date{}
	
	\maketitle
	
	\begin{abstract} 
		An $n$-queens configuration is a placement of $n$ mutually non-attacking queens on an $n\times n$ chessboard. The $n$-queens completion problem, introduced by Nauck in 1850, is to decide whether a given partial configuration can be completed to an $n$-queens configuration. In this paper, we study an extremal aspect of this question, namely:
		how small must a partial configuration be so that a completion is always possible? We show that any placement of at most $n/60$ mutually non-attacking queens can be completed. We also provide partial configurations of roughly $n/4$ queens that cannot be completed, and formulate a number of interesting problems. Our proofs 
		connect the queens problem to rainbow matchings in bipartite graphs and use probabilistic arguments together with linear programming duality. 
	\end{abstract}

	\section{Introduction}
	
	An \defn{$n$-queens configuration} is a set of $n$ queens on an $n\times n$ chessboard such that no two are contained in the same row, column, or diagonal.
	The \defn{$n$-queens problem} is to determine $Q(n)$, the number of distinct $n$-queens configurations. The 8-queens problem was first published by German chess composer Max Bezzel in 1848, and attracted the attention of many mathematicians, including Gauss.
	We refer to the surveys~\cite{BS:09,RVZ:94} for a more detailed account of the history and many related problems. 
	Recently, there has been some exciting progress~\cite{BK:21,luria:17,LS:21,simkin:21} towards the $n$-queens problem and its toroidal version which was introduced by P\'olya in 1918.

	In this paper, we study a very natural variant, namely the $n$-queens completion problem, the first instance of which was introduced by Nauck~\cite{nauck:1850} in 1850 (see Figure~\ref{fig:nauck}).
	\begin{figure}[ht]
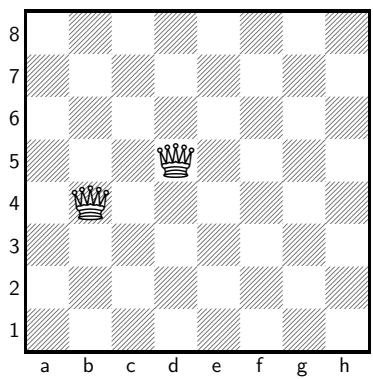

		\fenboard{8/8/8/3Q4/1Q6/8/8/8 w - - 0 0}
		\begin{center}
			\noindent \scalebox{0.8}{\showboard}
		\end{center}
		\caption{In 1850, Nauck posed the problem with two queens on b4 and d5 already given. Is it possible to add six more queens to obtain an $8$-queens configuration?}
		\label{fig:nauck}
	\end{figure}
	In order to formulate the problem precisely, let us represent the chessboard by the $2$-dimensional grid $[n]\times [n]$. For $i\in [n]$, define
	\begin{align*}
		R_i=\set{(i,j)}{j\in [n]},\\
		C_i=\set{(j,i)}{j\in [n]}.
	\end{align*}
	We let $\cR_n=\set{R_i}{i\in [n]}$ denote the set of all rows and $\cC_n=\set{C_i}{i\in [n]}$ the set of all columns.
	For $k\in\{-(n-1),\dots,n-1\}$, define 
	\begin{align*}
		D^+_k &= \set{(i,j)\in [n]\times [n]}{i+j-(n+1)=k},\\
		D^-_k &= \set{(i,j)\in [n]\times [n]}{i-j=k}.
	\end{align*}
	Hence, $D^+_0$ and $D^-_0$ are the two main diagonals of size~$n$.
	We let $\cD_n=\set{D^+_k ,D^-_k}{k\in\{-(n-1),\dots,n-1\}}$ denote the set of all diagonals (see Figure~\ref{fig:rainbow connection} below).
	Finally, we refer to the elements of $\cL_n=\cR_n \cup \cC_n\cup \cD_n$ simply as \defn{lines}.
	If $n$ is clear from the context, we omit the subscripts.
	
	A \defn{partial $n$-queens configuration} is a subset $Q'\In [n]\times [n]$ such that every line contains at most one element of~$Q'$, and an \defn{$n$-queens configuration} is a partial $n$-queens configuration of size~$n$. We say that a partial $n$-queens configuration $Q'$ can be \defn{completed} if there exists an $n$-queens configuration $Q$ with $Q'\In Q$.
	Now, the \defn{$n$-queens completion problem} is the following: Given a positive integer $n$ and a partial $n$-queens configuration $Q'$, can $Q'$ be completed?
	
	When $Q'$ is empty, the question simply becomes whether an $n$-queens configuration exists. This is known to be the case whenever $n\notin \{2,3\}$. However, the standard constructions (see~\cite{BS:09}) are mostly obtained via some algebraic equations and are thus very rigid. In particular, by placing a few queens, one can quickly rule out such a construction.
	In contrast, the recent works~\cite{BK:21,LS:21,simkin:21} on counting $n$-queens configurations rely on probabilistic methods. Roughly speaking, the idea is to place queens iteratively using a random procedure, where in each step one of the still available squares is chosen at random, until $(1-o(1))n$ queens are placed. Then the absorbing method is used to show that with high probability such a partial configuration can be completed. By carefully analysing this 2-phased procedure, one can obtain lower bounds on~$Q(n)$.
	On a high level, this suggests that if a given partial $n$-queens configuration $Q'$ ``looks random'', it should be completable.
	
	We consider an extremal aspect of the $n$-queens completion problem, namely, how small must $Q'$ be so that a completion is always possible?
	This type of question has a long history and has been extensively studied for many related problems. For instance, Evans~\cite{evans:60} conjectured in 1960 that any partial Latin square of order $n$ with fewer than $n$ cells filled can be completed, which would be best possible. Evan's conjecture was eventually proved by Smetaniuk~\cite{smetaniuk:81} and independently by Andersen and Hilton~\cite{AH:83} (see also~\cite{AZ:18} for a nice exposition about this problem).
	In a similar spirit, given a partial Steiner triple system of order $n\equiv 1,3\mod{6}$, a famous conjecture of Nash-Williams~\cite{nash-williams:70} would imply that if at most $n/4$ pairs are covered at every point, then the triple system can be completed.
	Motivated by this, we make the following definition.
	
	\begin{defin}
		Define $\qc(n)$ as the maximum integer with the property that any partial $n$-queens configuration of size at most $\qc(n)$ can be completed. We call $\qc(n)$ the \defn{$n$-queens completion threshold}.
	\end{defin}
	
	We remark that this parameter also has an important algorithmic consequence.
	The $n$-queens problem has been used in many AI papers as a benchmark problem. Gent, Jefferson and Nightingale~\cite{GJN:17} showed that the $n$-queens completion problem is NP-complete in general. However, observe that for all $Q'$ whose size is restricted to at most $\qc(n)$, the decision problem whether $Q'$ can be completed is trivial.
	We refer to~\cite{GJN:17} for a more thorough discussion of complexity and its ramifications for Artificial Intelligence research.

	Our main result is that the completion threshold is linear in~$n$.
	
	\begin{theorem}\label{thm:main}
		For all sufficiently large $n$, we have $n/60\le \qc(n)\le n/4.$
	\end{theorem}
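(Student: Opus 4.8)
The plan is to prove the two bounds by quite different means, with the lower bound being the substantial half. For the upper bound $\qc(n)\le n/4$ it suffices to exhibit, for all large $n$, a non-completable partial configuration of size $\approx n/4$, and I would obtain non-completability already at the level of the row--column bipartite graph. Any completion of $Q'$ must in particular match the unused rows to the unused columns using only cells not attacked by $Q'$, so it is enough to place $\approx n/4$ mutually non-attacking queens for which Hall's condition fails. Since a queen attacks at most three cells of any row other than its own (one via its column and one via each of its two diagonals), $\Theta(n)$ queens can kill $\Theta(n^2)$ cells; the content of the construction is to arrange about $n/4$ of them so that essentially every cell of, say, the top-right quadrant is attacked, after which the $\approx n/2$ unused rows of the top half have their available cells confined to fewer than $n/2$ columns (with a little more care one ensures a further column is also emptied in these rows, making the deficiency strict). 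The work lies entirely in choosing the positions so the covering is almost perfect despite the non-attacking restriction, and I would then optimise the arrangement to bring its size down to $\approx n/4$; there is no conceptual difficulty here.

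For the lower bound $\qc(n)\ge n/60$, fix $Q'$ with $q:=|Q'|\le n/60$, let $I,J\subseteq[n]$ be the sets of unused rows and columns (so $|I|=|J|=n-q$), and let $A\subseteq I\times J$ consist of the cells on no diagonal occupied by $Q'$; a cell of $I\times J$ automatically shares no row or column with $Q'$. First I would observe that completing $Q'$ is equivalent to choosing a perfect matching of the bipartite graph with edge set $A$ whose cells have pairwise distinct $D^+$-diagonals and pairwise distinct $D^-$-diagonals --- i.e.\ a perfect matching that is simultaneously rainbow for the two proper edge-colourings $c^+(i,j)=i+j$ and $c^-(i,j)=i-j$ of $K_{I,J}$. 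Since $q$ is small this rainbow-matching instance is dense and well spread: each queen of $Q'$ forbids at most two cells of any given unused row (one per diagonal), so every vertex has degree at least $n-3q$ in $A$, and every colour class is a matching lying in a single diagonal.

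To produce the doubly-rainbow perfect matching I would argue in two stages. First, pass to a fractional version: weights $x_{ij}\ge 0$ supported on $A$ with every row- and column-sum equal to $1$ and every $D^+$- and $D^-$-diagonal sum at most $1$ --- a fractional $n$-queens configuration living on the allowed cells. Existence of such $x$ should follow from linear-programming duality: by Farkas' lemma infeasibility would yield a weighting of rows, columns and diagonals certifying an obstruction, and one argues that any such certificate forces a ``blocked'' substructure far too large to be produced by only $q\le n/60$ queens (a convenient warm-up is the unrestricted case $A=I\times J$, where $x$ may be taken as a uniform mixture of genuine $n$-queens configurations, damped on the long central diagonals). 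Second, round $x$ to an integral doubly-rainbow matching by a probabilistic argument --- a semi-random nibble process that places queens greedily while keeping all diagonal loads concentrated, finished off with an absorption step, or else an appeal to a rainbow perfect matching theorem for almost-complete properly edge-coloured bipartite graphs. The hard part, and the reason the constant comes out as $1/60$ rather than near $1$, is controlling both families of diagonal constraints at once: a diagonal meets each row in a single cell, but there are about $2n$ of them of each type with lengths ranging from $1$ to $n$, so the long central diagonals are simultaneously the bottleneck for the fractional step and the loads hardest to keep below $1$ during the rounding, and absorbing the $\Theta(q)$ cells per line already forbidden by $Q'$ on top of this is where all the slack in $q\le n/60$ is spent.
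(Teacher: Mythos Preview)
Your proposal is a plan rather than a proof, and in both halves it diverges from the paper's argument in ways that leave genuine gaps.

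\medskip
\textbf{Upper bound.} You aim to force a Hall violation in the row--column bipartite graph by placing about $n/4$ non-attacking queens that diagonally attack an entire $n/2\times n/2$ quadrant. You dismiss this as having ``no conceptual difficulty'', but it is not at all clear that such a configuration exists. A queen in the bottom half contributes at most two diagonal hits per column of the top-right quadrant, and column attacks only help for the at most $n/4$ columns your queens occupy; a back-of-the-envelope count shows that to cover every cell of even a single non-column-attacked column in the top-right you would need essentially all $n/2$ diagonals of your $n/4$ queens to pass through it, which places incompatible arithmetic constraints on the queen positions once you also demand they be mutually non-attacking. The paper does something entirely different: it places an $m$-queens configuration (with $m\approx 0.241n$) in the \emph{centre} of the board and certifies non-completability via LP duality, exhibiting an explicit line weighting (Proposition~\ref{prop:dual}) of value less than $n-m$ that covers every unattacked square. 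The construction exploits a specific $m$-queens solution whose queens lie close to the main diagonals, so that setting the weight of their diagonals to zero saves enough value.

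\medskip
\textbf{Lower bound.} You correctly identify the doubly-rainbow matching formulation, but your two-stage plan (fractional solution via Farkas, then round) glosses over the central obstacle: the two main diagonals have size $n$, equal to the vertex degrees, so there is no slack between colour degrees and vertex degrees, and off-the-shelf rainbow matching theorems (which need each colour class strictly smaller than the vertex degree) do not apply. Your remark that long diagonals are ``the bottleneck'' and should be ``damped'' is in the right direction but is not a mechanism. The paper handles this by first passing to an explicit $3\times 3$ block weighting (Proposition~\ref{prop:regularize}) in which every row and column has weight $5n/6+O(1)$ but every diagonal has weight at most $2n/3+O(1)$; it then regularises vertex weights (Proposition~\ref{prop:weight shift}), randomly sparsifies so the resulting subgraph is approximately regular with all colour degrees a constant factor below the vertex degree, and finally applies a tailor-made rainbow matching lemma for $t$-fold proper linear colourings (Lemma~\ref{lem:rainbow matching}), proved via the R\"odl nibble plus a short augmenting-path absorption. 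Your ``appeal to a rainbow perfect matching theorem for almost-complete properly edge-coloured bipartite graphs'' does not cover this setting: the colouring is $2$-fold, and without the weighting step the hypotheses of such theorems fail. The constant $1/60$ in the paper arises from the interaction of the $5n/6$ versus $2n/3$ gap with the loss from the $Q'$-deletions and the regularisation, not from a Farkas certificate.
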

	
	To prove this theorem, in Section~\ref{sec:lower bound}, we establish a connection between the $n$-queens completion problem and rainbow matchings in bipartite graphs.
	Then we deduce our lower bound modulo the proof of a ``rainbow matching lemma''. The study of rainbow subgraph problems has been very fertile in recent years (see e.g.~\cite{AB:09,APS:17,CKPY:20,CP:19,EGJ:19b,GRWW:21,GKMO:21,KPSY:ta,KKKO:20,MPS:19,MPS:20,MPS:20pre,MCS:21,pokrovskiy:18}).
	Our new rainbow matching lemma (Lemma~\ref{lem:rainbow matching}) relates to this large body of recent results in Extremal Combinatorics. Its proof, provided in Section~\ref{sec:matching}, extends ideas of several earlier works. Besides proving the tool we need, our aim in this section is to discuss these important ideas and show how to use them to give a streamlined and mostly self-contained argument. The upper bound in Theorem~\ref{thm:main} is obtained in Section~\ref{sec:construction} via an explicit construction. The proof that our constructed partial configuration is not completable uses linear programming duality.
	We have not attempted to optimize the constants in either bound. Some of our arguments can certainly be improved to yield better bounds. However, new ideas are needed to determine $\qc(n)$ asymptotically. We discuss this and some other possible avenues for future research in the final Section~\ref{sec:remarks}.

	\section{Queens completion problem and rainbow matchings} \label{sec:lower bound}
	
	In this section, we establish our lower bound on $\qc(n)$, by proving that any partial $n$-queens configuration of size at most $n/60$ can be completed.
	
	One of the main ideas is to formulate the problem as a rainbow matching problem. This allows us to make use of a rich pool of tools and methods which have been developed for similar questions.
	A lot of research in this direction has been motivated by the famous Ryser--Brualdi--Stein conjecture. Originally formulated as a question about transversals in Latin squares, it postulates that every optimal edge colouring of $K_{n,n}$ contains a matching of size $n-1$ which is \defn{rainbow}, that is, all edges in it have distinct colours. The conjecture is still wide open. The best known bound, guaranteeing a rainbow matching of size $n-O(\log n /\log\log n)$, was recently obtained by Keevash, Pokrovskiy, Sudakov and Yepremyan~\cite{KPSY:ta} and improved several earlier results.
	
    The $n$-queens problem can be formulated in a very similar way.
	Given the $n\times n$ chessboard, consider the complete bipartite graph with one part $\cR$ consisting of the $n$ rows and one part $\cC$ consisting of the $n$ columns.
	Clearly, the edges of this graph correspond one-to-one to the squares of the chessboard. Moreover, a matching in the graph can be viewed as a placement of queens with no two queens in the same row or column.
	In order to encode the diagonal conflicts, we assign to each edge $(R_i,C_j)\in \cR\times \cC$ the set $\{D^+_{i+j-(n+1)},D^-_{i-j}\}$, that is, the two diagonals which contain the square $(i,j)$.
	Hence, in analogy with the Ryser--Brualdi--Stein conjecture, we might view the diagonals as colours, but instead of having just one colour per edge, each edge receives a set of two colours (see Figure~\ref{fig:rainbow connection}). 
	
	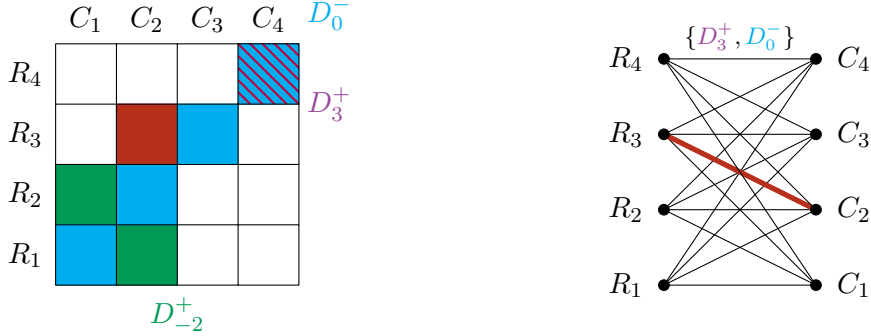
\begin{figure}[ht]
		\begin{center}
			\begin{tikzpicture}
				\begin{scope}[shift={(-4,0)},scale=0.8]

					\draw[fill,BrickRed] (1,2) rectangle (2,3);
					\draw[fill,ForestGreen] (2,0) rectangle (1,1);
					\draw[fill,ForestGreen] (1,1) rectangle (0,2);
					\draw[fill,Cyan] (0,0) rectangle (1,1);
					\draw[fill,Cyan] (1,1) rectangle (2,2);
					\draw[fill,Cyan] (2,2) rectangle (3,3);
					\draw[fill,Cyan] (3,3) rectangle (4,4);
					\begin{scope}
						\clip(3,3) rectangle (4,4);
						\foreach \w in {0,0.2,...,2}
						\draw[purple,thick] (3,3+\w)--(3+\w,3);
					\end{scope}

					\draw[step=1,black,thin] (0,0) grid (4,4);

					\node at (-0.5,0.5) {$R_1$};
					\node at (-0.5,1.5) {$R_2$};
					\node at (-0.5,2.5) {$R_3$};
					\node at (-0.5,3.5) {$R_4$};
					\node at (0.5,4.4) {$C_1$};
					\node at (1.5,4.4) {$C_2$};
					\node at (2.5,4.4) {$C_3$};
					\node at (3.5,4.4) {$C_4$};
					
					\node at (4.5,4.5) {\textcolor{Cyan}{$D^-_0$}};
					\node at (4.5,3) {\textcolor{Purple}{$D^+_3$}};
					\node at (2,-0.5) {\textcolor{ForestGreen}{$D^+_{-2}$}};

				\end{scope}	
				\begin{scope}[shift={(4,0)}, scale=1]
					\coordinate (x1) at (0,0);
					\coordinate (x2) at (0,1);
					\coordinate (x3) at (0,2);
					\coordinate (x4) at (0,3);
					\coordinate (y1) at (2,0);
					\coordinate (y2) at (2,1);
					\coordinate (y3) at (2,2);
					\coordinate (y4) at (2,3);

					\draw (x1)--(y1);
					\draw (x1)--(y2);
					\draw (x1)--(y3);
					\draw (x1)--(y4);
					\draw (x2)--(y1);
					\draw (x2)--(y2);
					\draw (x2)--(y3);
					\draw (x2)--(y4);
					\draw (x3)--(y1);
					\draw[BrickRed,line width=2pt] (x3)--(y2);
					\draw (x3)--(y3);
					\draw (x3)--(y4);
					\draw (x4)--(y1);
					\draw (x4)--(y2);
					\draw (x4)--(y3);
					\draw (x4)--(y4);
					
					\draw[fill] (x1) circle (2pt);
					\draw[fill] (x2) circle (2pt);
					\draw[fill] (x3) circle (2pt);
					\draw[fill] (x4) circle (2pt);
					\draw[fill] (y1) circle (2pt);
					\draw[fill] (y2) circle (2pt);
					\draw[fill] (y3) circle (2pt);
					\draw[fill] (y4) circle (2pt);
					
					\node at (-0.5,0) {$R_1$};
					\node at (-0.5,1) {$R_2$};
					\node at (-0.5,2) {$R_3$};
					\node at (-0.5,3) {$R_4$};
					\node at (2.5,0) {$C_1$};
					\node at (2.5,1) {$C_2$};
					\node at (2.5,2) {$C_3$};
					\node at (2.5,3) {$C_4$};
					
					\node at (1,3.3) {\footnotesize{$\{\textcolor{Purple}{D^+_3},\textcolor{Cyan}{D^-_0}\}$}};
				\end{scope}
			\end{tikzpicture}
		\end{center}
		\caption{Representing the chessboard as a bipartite graph. In the complete bipartite graph $K_{n,n}$, every edge represents a square of the chessboard. Every edge is ``coloured'' with a set of size 2 consisting of the two diagonals which contain the corresponding square.}\label{fig:rainbow connection}
	\end{figure}

	Observe that an $n$-queens configuration corresponds exactly to a perfect matching which is rainbow, meaning now that the colour sets of the edges in the matching are pairwise disjoint.
	Moreover, given a partial $n$-queens configuration $Q'$, let $\cR'$ be the set of remaining rows and $\cC'$ the set of remaining columns, and let $G$ be the bipartite graph with parts $\cR',\cC'$ where $(R_i,C_j)\in \cR'\times \cC'$ is still an edge if $(i,j)$ is not diagonally  attacked by any queen from~$Q'$. 
	Then a completion of $Q'$ corresponds to a perfect rainbow matching of~$G$.
	Note that assuming $Q'$ is not too large, $G$ will still be relatively dense. Indeed, say we consider $R_i\in \cR'$, then the degree of $R_i$ in $G$ will be at least $n-3|Q'|$ since every queen in $Q'$ attacks at most $3$ squares from~$R_i$ (one in the same column and up to two diagonally).
	
	Motivated by this, we want to understand under what conditions a dense graph $G$ with a certain edge colouring has a perfect rainbow matching.
	For simplicity, let us first consider again the case when $G$ is the complete bipartite graph $K_{n,n}$.
	Notice that the Ryser--Brualdi--Stein conjecture only asks for a rainbow matching of size $n-1$. In fact, when $n$ is even, there are examples of optimal edge colourings of $K_{n,n}$ without a perfect rainbow matching.
	A fair amount of research has been devoted to find additional conditions which guarantee a perfect rainbow matching.
	One natural additional assumption is to restrict the degrees of the colours, where by \defn{degree} of a colour we mean the number of edges with that colour.
	For instance, Erd\H{o}s and Spencer~\cite{ES:91} showed, by developing the lopsided Lov\'asz local lemma, that if $K_{n,n}$ is properly edge-coloured and all colour degrees are at most $n/16$, then a perfect rainbow matching exists.
	Recently, Montgomery, Pokrovskiy and Sudakov~\cite{MPS:19} and independently Kim, K\"uhn, Kupavskii and Osthus~\cite{KKKO:20} significantly strengthened this by proving that it suffices that all colour degrees are at most $(1-o(1))n$. 
	
	\begin{theorem}[\cite{KKKO:20,MPS:19}]\label{thm:matching}
		For every $\alpha>0$, if $n$ is sufficiently large, every properly edge coloured $K_{n,n}$ with all colour degrees at most $(1-\alpha)n$ has a perfect rainbow matching.
	\end{theorem}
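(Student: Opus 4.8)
The plan is to use the absorption method, adapted to cope with the extra colour constraint. Write $G=K_{n,n}$ with the given proper colouring; each colour class is then a matching of size at most $(1-\alpha)n$, so there are more than $n$ colours, and after forbidding any set of at most $(1-\alpha-o(1))n$ colours the graph on any linear-sized sub-board is still dense. Fix a small constant $\beta=\beta(\alpha)>0$. First I would select at random a set $W$ of about $\beta n$ rows and $\beta n$ columns, together with a set $\mathcal{C}_0$ of about $\beta n$ colours, reserve them, and build an \emph{absorbing structure} $A$ inside this reserved part; then run a random greedy process (a R\"odl-type nibble) to rainbow-match almost all of what remains; then use $A$ to mop up the leftover.

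The heart of the argument --- and the step I expect to be the main obstacle --- is the construction of $A$. What is needed is a subgraph of the reserved part, coloured from $\mathcal{C}_0$, such that for every ``leftover'' consisting of a small set of rows, an equinumerous set of columns and an equinumerous set of still-unused colours, the union of the appropriate part of $A$ with that leftover has a perfect rainbow matching. The elementary building block is a constant-size gadget that can be toggled between a configuration leaving a prescribed (row, column, colour) triple free and one that does not, while using the same colours in total. Since one cannot afford a separate gadget for every possible leftover, the gadgets are arranged according to an auxiliary bipartite \emph{template} with robust (``distributive'') matching properties, and this template is then embedded into the reserved part of $G$ as a rainbow subgraph; carrying that out repeatedly requires finding a short rainbow path or matching that avoids a bounded number of colours, which is possible precisely because all colour degrees are at most $(1-\alpha)n$ and not close to $n$. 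This is exactly what the arguments of Montgomery, Pokrovskiy and Sudakov~\cite{MPS:19} and of Kim, K\"uhn, Kupavskii and Osthus~\cite{KKKO:20} accomplish.

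For the nibble, on the non-reserved rows and columns and with the reserved colours forbidden, one repeatedly adds a uniformly random still-available edge and deletes every edge sharing a vertex or a colour with it; each addition kills only $O(n)$ edges (at most $2n$ through its two endpoints and at most $(1-\alpha)n$ through its colour), so a standard concentration analysis yields a rainbow matching missing only $o(n)$ of the non-reserved vertices, with the leftover rows, columns and colours distributed pseudorandomly. A short clean-up --- pushing a few reserved vertices and colours into play --- then makes the leftover small and balanced in the way $A$ demands, and toggling the appropriate gadgets of $A$ completes it to a perfect rainbow matching of $G$. The recurring difficulty throughout, and the reason the local lemma argument of Erd\H{o}s and Spencer~\cite{ES:91} reached only colour degree $n/16$, is that every chosen edge both occupies two vertices and consumes a colour, so the absorber must trade vertices against colours in lockstep. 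The variant we actually use below, Lemma~\ref{lem:rainbow matching}, is proved along these lines --- essentially from scratch --- in Section~\ref{sec:matching}.
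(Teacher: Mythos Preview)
Your outline correctly sketches the absorption-with-gadgets approach of the original papers~\cite{MPS:19,KKKO:20}, and is a valid strategy for Theorem~\ref{thm:matching}. Note, however, that the present paper does not prove this theorem directly --- it is quoted as a cited result --- but instead proves the more general Lemma~\ref{lem:rainbow matching}, remarking that a streamlined proof of Theorem~\ref{thm:matching} drops out as a by-product.

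The route actually taken in Section~\ref{sec:matching} differs from your sketch in a significant way. You reserve \emph{vertices} (rows and columns) together with colours and build a gadget-based absorber inside the reserved part, following the template method. The paper reserves \emph{only colours}: it splits the colour set randomly into six parts, runs the nibble on the edges coloured entirely from the main part to get an almost-perfect rainbow matching, and then augments. The augmentation is not via pre-built absorbing gadgets but via short augmenting paths (of length~$10$), found one uncovered pair at a time, with the five new edges drawn from the five reserved colour classes so as to be automatically colour-disjoint. The expansion needed to grow these paths comes from Lemma~\ref{lem:colour split}, which controls edge counts between linear-sized sets after colour sampling. Your closing sentence --- that Lemma~\ref{lem:rainbow matching} is proved ``along these lines'' --- is therefore not quite right: the paper's argument sidesteps the template/gadget machinery entirely and is closer in spirit to an alternating-path argument than to distributive absorption. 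What this buys is simplicity (no absorber construction, no robust template); what your approach buys is the flexibility of the original papers, which handle more general rainbow spanning structures than matchings.
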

	
	We will prove here a ``rainbow matching lemma'' which is of the same nature as Theorem~\ref{thm:matching} and suited to our needs. 
	In order to state it, we need to introduce a bit more notation.
	Given a graph $G$, a \defn{$t$-fold edge colouring} is an assignment of sets of size $t$ to the edges of~$G$.
	The colouring is called \defn{proper} if the edges at a given vertex have pairwise disjoint colour sets, and a subgraph of $G$ is called \defn{rainbow} if all its edges have pairwise disjoint colour sets. 
	In our application, we will only need $2$-fold edge colourings (a square of the chessboard coloured with the two diagonals containing it). 
	The \defn{degree} of a colour $c$ is the number of edges whose colour set contains~$c$.
	Finally, we call the colouring \defn{linear} if for every pair of colours there is at most one edge that contains both.
	
	As discussed above, it is important for us to find perfect rainbow matchings in graphs which are not necessarily complete bipartite.
	The crucial conditions (which are trivially satisfied by $K_{n,n}$) are that the given graph is ``approximately regular'', meaning that all vertex degrees are roughly the same, and ``uniformly dense'' in the sense that between any sufficiently large sets there are many edges.

	\begin{lemma}[rainbow matching lemma]\label{lem:rainbow matching}
		For any $\alpha>0$ and $t\in \bN$ there exist $\eps>0$ and $n_0$ such that the following is true for any $n\ge n_0$.
		Let $G$ be a bipartite graph with parts $A,B$ of size~$n$ with a proper linear $t$-fold edge colouring. Assume that the following conditions are satisfied for some $d$:
		\begin{enumerate}[label=\rm{(\roman*)}]
			\item every vertex has degree $(1\pm \eps)d$;\label{cond vertex degrees}
			\item every colour has degree at most $(1-\alpha)d$;\label{cond colour degrees}
			\item any two sets $A'\In A$ and $B'\In B$ of size at least $(1-\alpha)d$ have at least $\alpha n^2$ edges between them.\label{cond dense}
		\end{enumerate}
		Then $G$ has a perfect rainbow matching.
	\end{lemma}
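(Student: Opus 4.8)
\emph{Proof proposal.} The plan is to combine the semi-random (``nibble'') method with an absorption argument, in the spirit of the recent proofs of Theorem~\ref{thm:matching} in~\cite{KKKO:20,MPS:19}, but using the pseudorandomness hypotheses~\ref{cond vertex degrees}--\ref{cond dense} in place of the completeness of $K_{n,n}$. Throughout we introduce auxiliary constants $\eps\ll\beta\ll\gamma\ll\alpha$ and assume $n$ is large; note that \ref{cond dense} already forces $d=\Theta(n)$, so ``$(1-\alpha)d$'' and ``$n$'' are comparable.

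\emph{Step 1: an absorber.} First I would reserve a random set $\cZ$ of about $\gamma n$ colours and choose random flexible sets $F_A\In A$, $F_B\In B$ with $|F_A|=|F_B|=\beta n$. Working inside the bipartite graph on $F_A\cup F_B$ formed by those $G$-edges whose colour set lies entirely in $\cZ$, I would build an \emph{absorbing gadget}: a partial rainbow matching $M^{\mathrm{abs}}$ perfectly matching $F_A$ to $F_B$, together with a reservoir of further $\cZ$-coloured edges, arranged so that for every pair $A'\In A\sm F_A$, $B'\In B\sm F_B$ with $|A'|=|B'|\le\beta^2 n$, the colours of $\cZ$ support a rainbow matching covering exactly $F_A\cup F_B\cup A'\cup B'$. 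In words, the gadget can reroute to swallow any small balanced defect. Since the flexible sets are random, hypothesis \ref{cond vertex degrees} makes the relevant graphs (between $F_A$ and $F_B$, and between any would-be leftover vertex and $F_A$ or $F_B$) linearly dense even after restricting to $\cZ$-coloured edges; the gadget is then found by a probabilistic argument in the style of Montgomery's robustly matchable bipartite graphs, selecting random short rainbow ``switching'' paths through $\cZ$ and using the Lov\'asz local lemma to make them sufficiently disjoint. Here hypothesis \ref{cond colour degrees} (no colour class is dominant) and the linearity of the colouring (no two colours coincide on many edges) are exactly what keep the local-lemma analysis under control.

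\emph{Step 2: the nibble, then absorb.} Next I would delete from $G$ every edge meeting a colour of $\cZ$ and every vertex of $F_A\cup F_B$; the resulting graph $G_0$ on $(A\sm F_A)\cup(B\sm F_B)$ still satisfies versions of \ref{cond vertex degrees}--\ref{cond dense} with slightly weakened parameters, because the colouring is linear and $\cZ$ is small. Now run the standard semi-random rainbow matching process on $G_0$: repeatedly add a sparse random rainbow matching, delete its vertices and colours, and iterate for boundedly many rounds. One tracks the evolving leftover graph, showing that vertex degrees and colour degrees stay proportional to their initial values and, using \ref{cond dense}, that no uncovered vertex is ever stranded; Chernoff/Azuma-type concentration then yields a rainbow matching $M^{\mathrm{nib}}$ in $G_0$ covering all but sets $A'\In A\sm F_A$, $B'\In B\sm F_B$ with $|A'|=|B'|\le\beta^2 n$. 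Since $M^{\mathrm{nib}}$ avoids every colour of $\cZ$, applying the gadget of Step~1 to $A',B'$ produces a rainbow matching $M^{*}$ on $F_A\cup F_B\cup A'\cup B'$ using colours of $\cZ$ only, and $M^{\mathrm{nib}}\cup M^{*}$ is the desired perfect rainbow matching of $G$.

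\emph{Main obstacle.} The heart of the argument is Step~1: producing a \emph{rainbow} absorber. Even for properly (one-fold) edge-coloured $K_{n,n}$ this is already intricate, and here there are two extra difficulties. First, the host graph is only pseudorandom rather than complete, so edges between prescribed vertex sets cannot be chosen freely; the density hypothesis \ref{cond dense} must be invoked both to locate switching structures and, during the nibble, to prevent the process from getting stuck on a sparse patch. Second, the colouring is $t$-fold, and without linearity a pair of colours could be ``entangled'' along many edges, wrecking the near-independence needed for the probabilistic construction — linearity is precisely the hypothesis that tames this. Setting up the nibble so that it leaves a genuinely small leftover while never touching a reserved colour is the remaining, more routine, technical point.
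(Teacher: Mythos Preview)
Your high-level plan (nibble to an almost-perfect rainbow matching, then complete) matches the paper, but the completion step is genuinely different. The paper does \emph{not} build an absorber or reserve flexible vertex sets. Instead it splits the colour set into six random parts $C_0,C_1,\dots,C_5$ (with $C_0$ getting a $1-\alpha/t$ fraction and each $C_i$ an $\alpha/5t$ fraction), runs the nibble inside the subgraph $G_0$ of $C_0$-coloured edges, and then augments the resulting matching one vertex pair at a time via alternating paths of length exactly ten: the five new edges of each path are taken from $G_1,\dots,G_5$ respectively, so they are automatically pairwise colour-disjoint, and since only $O(\eps n)$ non-$G_0$ edges are ever added, properness keeps the number of ``blocked'' edges at each vertex negligible. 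The key structural input is an expansion property~($\dagger$) --- derived from almost-regularity plus a colour-splitting lemma (Lemma~\ref{lem:colour split}) --- which after two expansion steps from the uncovered endpoints produces sets $A_4,B_4$ of size at least $(1-\alpha)d$; condition~\ref{cond dense} is then invoked exactly once, to find the middle $G_5$-edge between $A_4$ and $B_4$. This is precisely what allows the paper to state~\ref{cond dense} with the asymptotically optimal threshold $(1-\alpha)d$ rather than~$\eps n$.

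Your absorber route is a legitimate alternative --- it is essentially the strategy of~\cite{KKKO:20} --- but two points deserve care. First, it does not obviously exploit the sharpness of~\ref{cond dense}: Montgomery-style gadgets require locating many short switching paths inside small flexible sets, and here density is only guaranteed between sets of size $(1-\alpha)d$; your appeal to~\ref{cond vertex degrees} for the random sets $F_A,F_B$ is reasonable for raw edge counts, but the finer structure the gadget needs may not follow from~\ref{cond dense} in its weak form. Second, a minor misconception: in the paper the nibble (Corollary~\ref{cor:approx}) uses only~\ref{cond vertex degrees} and~\ref{cond colour degrees}; condition~\ref{cond dense} plays no role in preventing the process from ``getting stuck on a sparse patch'' and is used solely at the augmenting step. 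The paper's augmenting-path argument is thus both more elementary (no robustly matchable bipartite graphs, no local lemma) and better tailored to the stated hypotheses.
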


	Let us briefly compare this result to earlier works. Conceptionally, the main difference is that we consider $t$-fold edge colourings instead of just normal edge colourings. A more subtle variation concerns condition~\ref{cond dense}. Previous proofs required here that even sets of relatively small size, say, $\eps n$, have many edges between them. In contrast, note that the size condition in \ref{cond dense} is asymptotically optimal. For instance, when $G$ is the disjoint union of two copies of $K_{d,d+1}$, where $n=2d+1$, then any sets $A',B'$ of size $(1+\alpha)d$ have quadratically many edges between them, but there is no perfect matching. 
	Finally, we remark that the assumption of the colouring being proper and linear can be weakened. Say the colouring is \defn{$b$-bounded} if every colour appears on at most $b$ edges incident to any given vertex, and any pair of colours appears on at most $b$ edges together. Then our proof of Lemma~\ref{lem:rainbow matching} generalizes to the setting when the given colouring is $b$-bounded for some fixed constant~$b$.

	We will prove Lemma~\ref{lem:rainbow matching} in Section~\ref{sec:matching}. In the remainder of this section, we use it to prove the lower bound of Theorem~\ref{thm:main}.
	In order to motivate the first step of our argument, observe that Lemma~\ref{lem:rainbow matching} is not directly applicable to our problem even if $Q'$ is empty and hence $G$ is the complete bipartite graph $K_{n,n}$. Although conditions~\ref{cond vertex degrees} and \ref{cond dense} trivially hold with $d=n$, condition~\ref{cond colour degrees} is not satisfied because the main diagonals have size $n$ as well.
	However, on average the colours (diagonals) have degree $\sim n/2$. Our first goal is thus to find a subgraph $G'$ where not only the average degree of the colours is significantly smaller than the vertex degrees, but really all the colours have small degree.
	Equivalently, we want to find a subset $\Lambda\In [n]\times [n]$ of the chessboard such that the diagonals contain significantly fewer elements from $\Lambda$ than the rows and columns. 
	It is beneficial to consider the following fractional variant of this problem, where the aim is to find a non-negative weight function on $[n]\times [n]$ such that the total weight of each diagonal is smaller than that of the rows and columns.
	One can easily find such a weight function for $n=3$, say, and then scale it appropriately (see Figure~\ref{fig:weigthing}).

	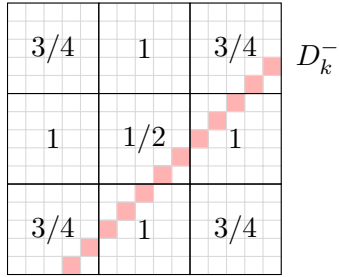
\begin{figure}[ht]
		\begin{center}
			\begin{tikzpicture}[scale=1.2]
				
				\foreach \i in {0,0.2,...,2.2}
				\draw[fill,red!30] (-0.4+\i,-1+\i) rectangle (-0.4+0.2+\i,-1+0.2+\i);
				
				\draw[thin,step=0.2,gray!30] (-1,-1) grid (2,2);

				\draw (0,0) rectangle node{$1/2$} (\x,\x);
				\draw (0,\x) rectangle node{$1$} (\x,\x+\x);
				\draw (\x,0) rectangle node{$1$} (\x+\x,\x);
				\draw (-\x,0) rectangle node{$1$} (0,\x);
				\draw (0,-\x) rectangle node{$1$} (\x,0);
				\draw (\x,\x) rectangle node{$3/4$} (\x+\x,\x+\x);
				\draw (-\x,-\x) rectangle node{$3/4$} (0,0);
				\draw (-\x,\x+\x) rectangle node{$3/4$} (0,\x);
				\draw (\x,0) rectangle node{$3/4$} (\x+\x,-\x);
				
				\node at (2.4,1.4) {$D^-_k$};
			\end{tikzpicture}
		\end{center}
		\caption{The weight function for $n=3$. All rows and columns have weight $5/2=5n/6$, while all diagonals have weight at most $2=2n/3$. For larger $n$, one can simply scale this weighting and achieve the same effect.}
		\label{fig:weigthing}
	\end{figure}

	\begin{prop}\label{prop:regularize}
		For all $n\in \bN$, there exists a weighting $w\colon [n]\times[n]\to [1/2,1]$ with the property that every row and column has total weight $5n/6+O(1)$, but every diagonal has weight at most $2n/3+O(1)$.
	\end{prop}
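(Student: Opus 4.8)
The plan is to build $w$ by \emph{stretching} the $3\times 3$ pattern of Figure~\ref{fig:weigthing} over the whole board. (One should not \emph{tile} the board with copies of that pattern: a diagonal at a suitable offset weaves through the tiles and collects $\Theta(n)$ more than $2n/3$, so the pattern must be dilated, not repeated.) Concretely, partition $[n]$ into three consecutive intervals $I_1,I_2,I_3$ of sizes as equal as possible with $|I_1|=|I_3|$, so that $|I_a|=n/3+O(1)$ for each $a$; this splits the board into the nine blocks $I_a\times I_b$. Let $W$ be the symmetric $3\times 3$ array with $W(2,2)=1/2$, with $W(a,b)=1$ when exactly one of $a,b$ equals $2$, and with $W(a,b)=3/4$ otherwise, and set $w(i,j):=W(a,b)$ for $(i,j)\in I_a\times I_b$. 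Every value of $w$ lies in $[1/2,1]$, as required.

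First I would verify the row and column weights. A row contained in $I_a$ has weight $\sum_{b=1}^{3}|I_b|\,W(a,b)=\frac n3\bigl(W(a,1)+W(a,2)+W(a,3)\bigr)+O(1)$, and every row of $W$ sums to $\tfrac52$ (namely $\tfrac34+1+\tfrac34$ for $a\in\{1,3\}$ and $1+\tfrac12+1$ for $a=2$), so this is $\tfrac{5n}{6}+O(1)$; columns are handled identically using the left--right symmetry of $W$.

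The heart of the matter is the diagonal bound. The function $w$ is invariant under reversing all rows and under reversing all columns; composing these reflections appropriately maps any diagonal to some $D^-_k$ with $k\ge 0$, so it is enough to bound the weight of $D^-_k$ for $k\ge 0$. Put $m:=\lfloor n/3\rfloor$ and $\ell_{a,b}:=\bigl|D^-_k\cap(I_a\times I_b)\bigr|$, so the weight of $D^-_k$ equals $\sum_{a,b}W(a,b)\,\ell_{a,b}$. From the equivalence $(i,i-k)\in I_a\times I_b\iff\max\{(a-1)m,\,(b-1)m+k\}<i\le\min\{am,\,bm+k\}$, a short computation gives $\ell_{a,b}=\max\{0,\,m-|k-(a-b)m|\}+O(1)$; in particular $\ell_{a,b}$ depends, up to an additive constant, only on $c:=a-b\in\{-2,\dots,2\}$. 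Writing $\lambda_c:=\max\{0,\,m-|k-cm|\}$ and $S_c:=\sum_{a-b=c}W(a,b)$, the weight of $D^-_k$ is therefore $\sum_{c=-2}^{2}S_c\lambda_c+O(1)$, and a direct check gives $S_0=S_1=S_{-1}=2$ and $S_2=S_{-2}=\tfrac34$, so that $S_c\le 2$ for every $c$. On the other hand, the ``tent'' functions $c\mapsto\max\{0,\,m-|k-cm|\}$ are a partition of unity scaled by $m$, hence $\sum_{c\in\bZ}\lambda_c=m$ and so $\sum_{c=-2}^{2}\lambda_c\le m$. Combining, the weight of $D^-_k$ is at most $2\sum_{c=-2}^{2}\lambda_c+O(1)\le 2m+O(1)=\tfrac{2n}{3}+O(1)$, which is what we wanted.

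I expect the one genuinely fiddly point to be the ``short computation'' showing that $\ell_{a,b}$ is a function of $a-b$ up to $O(1)$ --- this is precisely where the (slightly) unequal interval lengths get absorbed into the error term --- after which the diagonal estimate is forced by the single numerical fact that every block-diagonal sum $S_c$ of $W$ is at most $2$, i.e.\ at most $\tfrac23$ of the three blocks lying on it. The only conceptual ingredient is to notice that the $n=3$ weighting should be dilated rather than periodically extended.
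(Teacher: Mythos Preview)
Your construction is exactly the one in the paper, and your verification of the row/column weights matches it verbatim. The only point of departure is the diagonal bound: the paper argues by a case split (for $k\le n/3$ it computes the weight of $D^-_k$ directly as $(n/3-k)\cdot\tfrac34 + k\cdot 1 + (n/3-k)\cdot\tfrac12 + k\cdot 1 + (n/3-k)\cdot\tfrac34 + O(1)=2n/3+O(1)$, and for larger $k$ it observes that both the diagonal length and the pointwise weights are non-increasing), whereas you package the same computation via the block-intersection counts $\ell_{a,b}=\lambda_{a-b}+O(1)$ and the partition-of-unity identity $\sum_c\lambda_c=m$ together with $S_c\le 2$. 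Both arguments are correct and of comparable length; yours has the minor advantage of avoiding the case split, while the paper's direct computation makes it transparent that the bound $2n/3$ is actually attained (your inequality $\sum_c S_c\lambda_c\le 2\sum_c\lambda_c$ hides this).
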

	
	\begin{proof}
		Define a weight function $w\colon [n]\times[n]\to [1/2,1]$ by setting
		\begin{align*}
			w((i,j)) := \begin{cases}     
				1/2 & \mbox{if } i/(n+1),j/(n+1)\in [1/3,2/3],\\
				3/4 &  \mbox{if } i/(n+1),j/(n+1)\in [0,1/3)\cup (2/3,1], \\
				1 & \mbox{otherwise.}
			\end{cases}
		\end{align*}
		Note that we scale $i,j$ by $n+1$ for symmetry reasons.
		With this definition, we have $w(R)=5n/6+O(1)$ for all $R\in \cR$ and $w(C)=5n/6+O(1)$ for all $C\in \cC$. Moreover, we claim that $w(D)\le 2n/3+O(1)$ for any $D\in \cD$. (For a line $L$, we use $w(L)$ to denote the sum of the weights of all elements of~$L$.)
		By symmetry of $w$, it suffices to consider $D^-_k$ for fixed $k\in \{0,\dots,n-1\}$.
		By the definition of $w$, the weight of any such diagonal with $k\ge n/3$ is dominated by the one with $k=\lceil n/3\rceil$, since for larger $k$, the size of the diagonals and the weights of the elements are non-increasing as $k$ increases. Finally, for $k\le \lceil n/3\rceil$, we have
		\begin{align*}
			w(D^-_k) &= (n/3-k)\cdot 3/4 + k\cdot 1 + (n/3-k)\cdot 1/2 + k\cdot 1  +  (n/3-k)\cdot 3/4 +O(1)\\
			&=2n/3+O(1),
		\end{align*}
		proving the claim.
	\end{proof}

	Having found such a weighting, the existence of the desired set $\Lambda$ follows readily from a probabilistic argument. Simply choose $\Lambda$ by including every square  $(i,j)$  independently with probability~$w((i,j))$. Then the \emph{expected} degree of a line is simply its total weight under~$w$, and with high probability all the actual degrees will be close to their expectation. (Throughout, we say that an event holds \defn{with high probability} if its probability tends to $1$ when $n\to \infty$.)
	Note that after this initial step, we could apply Lemma~\ref{lem:rainbow matching} to find an $n$-queens configuration.
	However, if a partial configuration $Q'$ is given, then the graph $G$ in which we seek a perfect rainbow matching might not be regular anymore, since a queen in $Q'$ might attack $3$ elements from some row but only $1$ element from another. 
	Hence, in a second step, we want to regularize the degrees of~$G$. We approach this problem again by considering the fractional variant first, and then using the same probabilistic argument as above to turn an appropriate weight function into a subgraph which approximates the weights.
	In fact, it is more convenient to only apply the probabilistic argument once, after both of the above steps have been carried out in the fractional setting.

	\begin{prop}\label{prop:weight shift}
		Let $c,d'> 0$ and let $G$ be a bipartite graph with parts $A,B$ of size~$n$ where any two vertices in the same part have at least $c$ common neighbours. Let $w_0\colon E(G)\to [0,1]$ be an edge weighting such that $\sum_{e\ni u} w_0(e)=\bar{d}\pm d'$ for all vertices $u$, where $\bar{d}=\frac{1}{n}\sum_{e\in E(G)}w_0(e)$. Then there exists an edge weighting $w \colon E(G)\to \bR$ such that the total weight of edges at every vertex is~$\bar{d}$, and $|w(e)-w_0(e)|\le 2d'/c$ for all $e\in E(G)$.
	\end{prop}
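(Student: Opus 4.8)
The plan is to view the desired reweighting as the solution of a transportation-type problem on the graph $G$, where we need to correct the small per-vertex discrepancies $\delta_u := \bar{d} - \sum_{e\ni u} w_0(e)$, each of which satisfies $|\delta_u|\le d'$. Note that $\sum_{u\in A}\delta_u = \sum_{u\in B}\delta_u = 0$ by the definition of $\bar d$, so the corrections are ``balanced'' across each side. We want to write $w(e) = w_0(e) + f(e)$ where $f\colon E(G)\to\bR$ is a ``flow-like'' correction with $\sum_{e\ni u} f(e) = \delta_u$ for every vertex $u$, and with $|f(e)|\le 2d'/c$ for every edge.

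First I would reduce to a local averaging construction. The key structural fact we are given is that any two vertices in the same part share at least $c$ common neighbours; this lets us move weight between two same-side vertices $u,u'$ by routing it through their common neighbourhood along paths of length two, $u\!-\!v\!-\!u'$. Concretely, to shift an amount $\theta$ from $u$ to $u'$ (i.e.\ decrease the edge-sum at $u$ by $\theta$ and increase it at $u'$ by $\theta$ without changing any other vertex sum), pick $c$ common neighbours $v_1,\dots,v_c$ and add $-\theta/c$ to each edge $uv_i$ and $+\theta/c$ to each edge $u'v_i$: the net change at each $v_i$ is zero, the change at $u$ is $-\theta$, and at $u'$ is $+\theta$. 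Each edge involved is perturbed by at most $|\theta|/c$.

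Next I would assemble these local shifts. Order the vertices of $A$ so that those with positive $\delta_u$ (``surplus'', meaning the current edge-sum is below $\bar d$) come first; think of this as a balanced flow problem on the complete graph on $A$ where each vertex $u$ must change its edge-sum by $\delta_u$. Since $\sum_{u\in A}\delta_u = 0$ and each $|\delta_u|\le d'$, one can route all of this with at most a total of $d'$ ``throughput'' across any single vertex --- for instance by a greedy matching of surpluses to deficits, or simply by shifting everything in turn toward one designated vertex and then out again, being careful that the total weight routed through or out of any given vertex $u$ is at most $\sum_{u'} |\delta_{u'}|$ restricted appropriately; the clean bound is that the total amount of weight we ever add to or subtract from the edges at a fixed common-neighbour vertex $v$ is at most $2d'$ (at most $d'$ coming ``in'' from the $A$-side corrections and at most $d'$ from the $B$-side corrections, handled in two separate rounds), and on each such edge the perturbation is at most (total throughput)$/c \le 2d'/c$. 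One must take a tiny bit of care that the $c$ common neighbours chosen for different shifts can be reused without the perturbations on a single edge accumulating beyond $2d'/c$; this is ensured by bounding the \emph{total} signed mass pushed onto each edge rather than summing $|\theta_i|/c$ over many shifts --- i.e.\ do the $A$-corrections as one coherent flow and the $B$-corrections as another.

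The main obstacle I anticipate is exactly this bookkeeping: making sure the per-edge bound $|w(e)-w_0(e)|\le 2d'/c$ holds after superimposing all the length-two reroutings, rather than a weaker bound that grows with the number of shifts. The trick is to treat the whole $A$-side correction as a single flow problem --- find $g\colon A\times A\to \bR_{\ge 0}$ (a transportation plan) with net out-flow $\delta_u$ at each $u$ and total flow $\|g\|_1 \le d'$ (possible since the $\delta_u$ sum to zero and have $\ell_1$-norm at most... well, one routes at most $d'$ total by matching) --- wait, more carefully: one routes exactly $\tfrac12\sum_u|\delta_u|\le \tfrac12 n d'$ total, but the relevant quantity is the flow \emph{through any single common-neighbour vertex}, which we can keep to at most $d'$ by spreading each pairwise shift over all $c$ available common neighbours and noting each common neighbour $v$ absorbs, from the edge $uv$, a contribution bounded by the part of the flow incident to $u$, which is at most $|\delta_u|\le d'$. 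Summing the $A$-round and the $B$-round gives the factor $2$. Once this is set up correctly the edge bound and the vertex-sum identity are both immediate, and $w$ is the required weighting (we impose no nonnegativity, matching the statement's codomain $\bR$), completing the proof. \qed
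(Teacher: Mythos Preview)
Your approach is essentially the same as the paper's: route weight between same-side vertices through their common neighbourhoods and bound the per-edge perturbation on $ab$ by $(\text{total weight moved at }a)/c + (\text{total weight moved at }b)/c \le 2d'/c$. The paper implements this more cleanly as a simple iterative greedy procedure---repeatedly pick $u,v$ in the same part with $w(u)>\bar d>w(v)$, shift $\eta=\min\{w(u)-\bar d,\bar d-w(v)\}$ uniformly over \emph{all} common neighbours, and observe that each step fixes at least one vertex---which sidesteps the transportation-plan bookkeeping you were wrestling with while yielding exactly the same bound.
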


	\begin{proof}
		We obtain the desired weighting iteratively. Starting with $w_0$, in each step, given the current weighting $w_i$, we perform a shift of the following form. Suppose $u,v$ are two vertices in the same part, such that $w_i(u)>\bar{d}$ and $w_i(v)<\bar{d}$ (where $w_i(u)$ is the sum of $w_i(e)$ over all edges $e$ incident to~$u$). Set $\eta:=\min\{w_i(u)-\bar{d},\bar{d}-w_i(v)\}$ and let $X$ be the set of common neighbours of $u$ and~$v$. Now, we make the following adjustment to obtain a new weighting~$w_{i+1}$. For all $x\in X$, we set $w_{i+1}(ux)=w_i(ux)-\frac{\eta}{|X|}$ and $w_{i+1}(vx)=w_i(vx)+\frac{\eta}{|X|}$, and $w_{i+1}(e)=w_i(e)$ for all other edges. Hence, the total weight of $u$ decreases by $\eta$, whereas the total weight of $v$ increases by $\eta$, and the total weight of all other vertices remains unchanged. In particular, when performing such a shift, the total weight of all vertices in $A$ remains the same, namely $\bar{d}n$, and similarly for~$B$. Hence, as long as there is some vertex whose weight is not equal to $\bar{d}$, we can find another vertex in the same part and perform a shift like above. Observe also that the number of vertices whose total weight is not equal to $\bar{d}$ is reduced in each step, hence the procedure will eventually stop with a weighting $w$ where all vertices have total weight exactly~$\bar{d}$.

		Consider an arbitrary edge $e=ab$. We need to check that during the procedure we did not change the weight of $e$ by more than $2d'/c$. For this, observe that the total weight of both $a$ and $b$ is changed by at most $d'$, and in each adjustment, we spread the adjustment over at least $c$ common neighbours. Hence, the total change received by any given edge is at most $2d'/c$.
	\end{proof}

	In the probabilistic argument, we will use the following standard concentration inequality. 
	
	\begin{lemma}[Chernoff--Hoeffding bound]\label{lem:chernoff}
		Let $X$ be the sum of $n$ independent Bernoulli random variables. Then for any $\lambda\ge 0$, we have $$\prob{|X-\expn{X}|\ge \lambda} \le 2\exp(-2\lambda^2/n).$$
	\end{lemma}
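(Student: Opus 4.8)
The plan is to use the standard exponential-moment (Chernoff) method together with Hoeffding's lemma. Write $X=\sum_{i=1}^{n}X_i$ where $X_i$ is Bernoulli with mean $p_i$, and set $Y_i:=X_i-p_i$, so that the $Y_i$ are independent, each has mean $0$, and each takes values in the interval $[-p_i,1-p_i]$, which has length $1$. It suffices to bound each of the two tails $\prob{X-\expn{X}\ge\lambda}$ and $\prob{X-\expn{X}\le-\lambda}$ by $\exp(-2\lambda^2/n)$ and then take a union bound, which contributes the factor~$2$.

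For the upper tail, I would fix $s>0$ and apply Markov's inequality to $\exp\!\big(s\sum_i Y_i\big)$, using independence to factor the expectation:
$$\prob{X-\expn{X}\ge\lambda}=\prob{e^{s\sum_i Y_i}\ge e^{s\lambda}}\le e^{-s\lambda}\prod_{i=1}^{n}\expn{e^{sY_i}}.$$
The key input is Hoeffding's lemma: if $Y$ is a mean-zero random variable supported in $[a,b]$, then $\expn{e^{sY}}\le e^{s^2(b-a)^2/8}$. Applied to each $Y_i$ (with $b-a=1$) this gives $\expn{e^{sY_i}}\le e^{s^2/8}$, hence $\prob{X-\expn{X}\ge\lambda}\le e^{-s\lambda+ns^2/8}$. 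Minimising the exponent over $s$ with the choice $s=4\lambda/n$ yields $\prob{X-\expn{X}\ge\lambda}\le e^{-2\lambda^2/n}$. The lower tail follows by running the identical argument with $Y_i$ replaced by $-Y_i$, which is again mean-zero and supported on an interval of length $1$.

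The main obstacle is proving Hoeffding's lemma. Here I would use convexity of $t\mapsto e^{st}$ to write, for $y\in[a,b]$, $e^{sy}\le\frac{b-y}{b-a}e^{sa}+\frac{y-a}{b-a}e^{sb}$; taking expectations and using $\expn{Y}=0$ reduces the claim to showing $\varphi(u)\le u^2/8$, where $u:=s(b-a)$, $\theta:=-a/(b-a)\in[0,1]$, and $\varphi(u):=-\theta u+\log\!\big(1-\theta+\theta e^{u}\big)$. One computes $\varphi(0)=\varphi'(0)=0$ and $\varphi''(u)=g(u)\big(1-g(u)\big)$ with $g(u)=\frac{\theta e^{u}}{1-\theta+\theta e^{u}}\in(0,1)$, so $\varphi''(u)\le 1/4$, and Taylor's theorem with Lagrange remainder gives $\varphi(u)\le u^2/8$. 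Everything beyond this lemma is the routine optimisation over $s$ described above.
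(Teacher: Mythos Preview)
Your proof is correct and is the standard derivation of Hoeffding's inequality via the exponential-moment method and Hoeffding's lemma. The paper, however, gives no proof of this lemma at all: it is simply quoted as a standard concentration inequality and used as a black box. So there is nothing to compare against on the paper's side; your write-up supplies a complete argument where the paper intentionally omits one.
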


	\lateproof{Theorem~\ref{thm:main}, lower bound}
	We consider the complete bipartite graph with one part $\cR$ consisting of the $n$ rows and one part $\cC$ consisting of the $n$ columns. Moreover, we define a $2$-fold edge colouring by assigning to each edge $(R_i,C_j)\in \cR\times \cC$ the set $\{D^+_{i+j-(n+1)},D^-_{i-j}\}$, that is, the two diagonals which contain the square $(i,j)$. Crucially, observe that this colouring is proper and linear, because any two lines of the chessboard intersect in at most one square.

	Now, let $Q'\In [n]\times [n]$ be an arbitrary partial $n$-queens configuration of size $\beta n$, where $\beta\le 1/60$.
	Let $\cR'\In \cR$ be the set of rows not containing a queen from $Q'$, and let $\cC'\In \cC$ be the set of columns not containing a queen from~$Q'$.
	Let $G$ be the subgraph induced by $\cR'$ and $\cC'$, after deleting all edges $(R_i,C_j)$ for which $Q'$ contains a queen on either diagonal $D^+_{i+j-(n+1)},D^-_{i-j}$.
	
	We want to prove that $Q'$ can be completed, which is equivalent to showing that $G$ has a perfect rainbow matching.
	Our aim is to apply the rainbow matching lemma. However, $G$ itself is not suitable for such an application, because it might be quite irregular and the degrees of some colours could be significantly larger than those of the vertices.
	Hence, we find a suitable spanning subgraph $G'\In G$ which meets the required conditions. Note that a perfect rainbow matching of $G'$ is also a perfect rainbow matching of~$G$. In order to prepare for an application of Lemma~\ref{lem:rainbow matching}, let $\alpha,\eps>0$ be sufficiently small and $n$ sufficiently large so that the lemma is applicable with $t=2$.
	
	By Proposition~\ref{prop:regularize}, there exists a weighting $w_0\colon \cR\times \cC \to [1/2,1]$ such that 
	$w_0(R_i)=5n/6+O(1)$ and $w_0(C_i)=5n/6+O(1)$ for all $i\in [n]$, and 
	$w_0(D^\pm_k)\le 2n/3+O(1)$ for all $k\in\{-(n-1),\dots,n-1\}$.
	Now, we restrict this weighting to the edges of the graph~$G$. Slightly abusing notation, we denote this restricted weighting again with~$w_0$. Hence, for all $R_i\in \cR'$ and $C_j\in \cC'$, we still have $w_0(R_i),w_0(C_j) \ge 5n/6+O(1)-3\beta n$ since every queen in $Q'$ attacks at most $3$ squares from $R_i,C_j$.
	For the same reason, any two vertices of $G$ in the same part have at least $n-6\beta n$ common neighbours.
	We still have $w_0(D^\pm_k)\le 2n/3+O(1)$ for all $k\in\{-(n-1),\dots,n-1\}$. Hence, for $\beta \leq 1/60$, the weight of the diagonals (colours) is significantly lower than that of the rows and columns (vertices).
	
	Next, we want to use this gap to regularize the weights of the vertices, while still maintaining a slight gap between the degrees of vertices and colours.
	Define 
	\begin{align}
		\bar{d}:=\frac{1}{(1-\beta)n} \sum_{(R_i,C_j)\in E(G)} w_0((R_i,C_j))
	\end{align}
	as the average weight of a vertex.
	We have $5n/6+O(1)-3\beta n \le \bar{d}\le 5n/6+O(1)$, so by applying Proposition~\ref{prop:weight shift} with $d'=3\beta n +O(1)$ and $c=(1-6\beta)n$, we can find a weighting $w\colon E(G)\to \bR$ such that every vertex has total weight $\bar{d}$, and the weight of $w_0$ is changed by at most $$\mu:=2d'/c =6\beta/(1-6\beta) +o(1)\le 1/9+o(1).$$ 
	
	Now, we randomly sparsify the graph $G$ to obtain a subgraph $G'$ which is approximately regular. For each edge $e\in E(G)$, we define
	$$p_e:=\frac{w(e)}{1+\mu}.$$
	This way, we have $p_e\le 1$ and $p_e\ge 1/10$, say.
	Let us include every edge $e\in E(G)$ independently in $G'$ with probability~$p_e$.
	We want to show that with high probability $G'$ has the properties \ref{cond vertex degrees}--\ref{cond dense} which we need to apply the rainbow matching lemma.
	
	Let us consider any vertex~$u$.
	The expected degree of $u$ in $G'$ is $d:=\bar{d}/(1+\mu)$. The Chernoff bound (Lemma~\ref{lem:chernoff}) implies that
	$$\prob{|d_{G'}(u)-d|\ge \eps d} \le 2\exp(-2(\eps d)^2/n)=\exp(-\Omega(n)).$$
	Hence, by a union bound over the at most $2n$ vertices, we see that \ref{cond vertex degrees} holds with high probability. 
	
	Now, consider any colour~$c$. Since the weight of each edge has increased by at most $\mu$ and $c$ appears on at most $n$ edges, the expected degree of $c$ in $G'$ is at most $(2n/3+O(1)+\mu n)/(1+\mu)$.
	Thus, again using Chernoff's bound, we see that with high probability,
	all colour degrees are at most $$\frac{2/3+\mu+\eps}{1+\mu}n \le \frac{(1-\alpha)\bar{d}}{1+\mu} = (1-\alpha)d,$$
	as needed for \ref{cond colour degrees}. To see that the above inequality is correct, note that $2/3+\mu < 5/6-3\beta$ by the upper bound on~$\beta$, with room to spare, so for small enough $\alpha,\eps$ we have $2/3+\mu+\eps \le (1-\alpha)(5/6-3\beta-o(1))$.
	
	Finally, consider two sets $\cR''\In \cR'$ and $\cC''\In \cC'$ of size at least~$(1-\alpha)d \ge n/2$.
	First observe that in $G$, there are at least 
	$|\cR''||\cC''|-2n\cdot \beta n \geq (n/2)^2 - 2n\cdot \beta n \ge n^2 / 5$ edges between $\cR''$ and~$\cC''$. This is because every queen in $Q'$ attacks at most $2n$ squares of the grid given by $\cR''$ and~$\cC''$.
	Since we have $p_e\ge 1/10$ for all edges $e\in E(G)$, we see that 
	$\expn{e_{G'}(\cR'',\cC'')} \ge n^2/50$. By Chernoff's bound,
	$$\prob{e_{G'}(\cR'',\cC'') \le n^2/100} \le 2\exp(-2(n^2/100)^2/n^2)\le \exp(-n^{3/2}).$$
	A union bound over the at most $4^n$ choices for $\cR'',\cC''$ thus shows that with high probability any two such sets have at least $n^2/100$ edges between them, satisfying \ref{cond dense}.

	Hence, by the probabilistic method, there exists a subgraph $G'$ which has all of the above properties.
	Finally, we can apply Lemma~\ref{lem:rainbow matching} to conclude that $G'$ has a perfect rainbow matching, which corresponds to a completion of~$Q'$.
	\endproof

	\section{Proof of the rainbow matching lemma}\label{sec:matching}
	
	In this section, we prove Lemma~\ref{lem:rainbow matching}.
	As discussed in Section~\ref{sec:lower bound}, this lemma is inspired by Theorem~\ref{thm:matching}. 
	There are two main steps in its proof. In the first step, we find an approximate rainbow matching, that is, a rainbow matching which covers all but $o(n)$ vertices. In the second step, we use local switchings to turn this approximate rainbow matching into a perfect rainbow matching. 
	As a by-product, we also obtain a streamlined proof of Theorem~\ref{thm:matching}.
	
	The first step is achieved using the celebrated R\"odl nibble.
	In order to formalize this, we need to introduce hypergraphs. A \defn{$k$-uniform hypergraph} $\cH$ consists of a set of vertices $V(\cH)$ and a set of edges $E(\cH)$, where every edge is a subset of $V(\cH)$ of size~$k$. 
	It is called \defn{linear} if any two vertices are contained in at most one edge together. 
	A \defn{matching} $\cM$ is a set of disjoint edges. A vertex is \defn{covered} by $\cM$ if it belongs to some edge in~$\cM$. The \defn{degree} of a vertex is simply the number of edges containing it, and the hypergraph is \defn{$D$-regular} if all vertices have degree~$D$.
	
	To see the connection with the $n$-queens problem let us define the hypergraph $\cH$ whose vertices are the lines $\cL_n$ of the chessboard and every square $(i,j)\in [n]\times [n]$ is represented by the edge $\{R_i,C_j,D^+_{i+j-(n+1)},D^-_{i-j}\}$. Then the matchings of $\cH$ correspond exactly to partial $n$-queens configurations.
	
	Many other combinatorial questions can be formulated as matching problems in hypergraphs, which is why general results concerning matchings in hypergraphs are of great interest. For the following discussion, let $\cH$ be a $k$-uniform linear hypergraph, where $k$ is a fixed constant. A celebrated result of Pippenger (see~\cite{PS:89}) based on the R\"odl nibble says that if $\cH$ is $D$-regular, for sufficiently large $D$, then it has an almost perfect matching. (The same conclusion holds if $\cH$ is not necessarily linear, but all codegrees are $o(D)$. For simplicity, we will only consider linear hypergraphs here.)
	
	Notice that the ``chessboard hypergraph'' above is not regular. While all rows and columns (and the two main diagonals) have degree $n$, the other diagonals have smaller degree. 
	What we will need here is a variant of Pippenger's theorem which does not require the hypergraph to be regular, as long as all the vertices we insist to cover have roughly the same degree, and the degrees of all other vertices are smaller (see Theorem~\ref{thm:nibble}). 
	We remark that this observation is by no means novel, but it seems to be less known than the regular case, which is why we include its short derivation from the following classical result.
	
	\begin{theorem}[Pippenger--Spencer~\cite{PS:89}]\label{thm:chromatic index}
		For any $\eps>0$ and $k\in \bN$ there exists $\Delta_0$ such that the following holds for all $\Delta\ge \Delta_0$. Let $\cH$ be a linear $k$-uniform hypergraph with maximum degree~$\Delta$. Then $E(\cH)$ can be partitioned into at most $(1+\eps)\Delta$ matchings.
	\end{theorem}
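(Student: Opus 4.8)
This is a classical theorem, and I would prove it by the semi-random method (the R\"odl nibble): the goal is to build a proper edge colouring with $N:=\lceil(1+\eps)\Delta\rceil$ colours through a sequence of random partial colourings. It is convenient first to reduce to the \emph{near-regular} case. By a standard padding argument (attaching disjoint copies of a fixed $k$-uniform linear almost-$\Delta$-regular hypergraph — e.g.\ one coming from a truncated affine plane of the appropriate order — at the low-degree vertices) one embeds $\cH$ as a spanning sub-hypergraph of a $k$-uniform linear hypergraph $\cH^+$ in which every vertex has degree $(1\pm o(1))\Delta$; since $\cH^+$ is linear, all its codegrees are at most $1=o(\Delta)$, and a proper colouring of $\cH^+$ restricts to one of $\cH$. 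So we may assume $\cH$ itself is $(1\pm o(1))\Delta$-regular.

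Now reserve $\eps\Delta/2$ of the $N$ colours for a final clean-up and run the nibble with the remaining palette $P$, $|P|\le(1+\eps/2)\Delta$. Throughout, let $\cG$ be the hypergraph of still-uncoloured edges and, for each $e\in E(\cG)$, let $L(e)\In P$ be the set of colours not yet used on an edge meeting~$e$. In one round: activate each edge of $\cG$ independently with a small probability $q$; give each active edge a uniformly random tentative colour from its current list; keep an active edge precisely when no edge meeting it received the same tentative colour; and permanently assign the kept colours. Because $\cG$ is linear and near-regular, each edge meets roughly $k$ times its degree many other edges, and these interact with it essentially independently, so one can write down the expected one-round change of the maximum uncoloured degree and of the list sizes $|L(e)|$, and use a bounded-differences (martingale-type) concentration inequality — linearity bounds the influence of any single edge — to show these quantities stay close to their expected trajectory with high probability. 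Iterating until the maximum uncoloured degree drops below $\eps\Delta/(2k)$, the leftover edges form a hypergraph whose intersection graph has maximum degree below $\eps\Delta/2$, so they can be finished greedily with the $\eps\Delta/2$ reserved colours; the total number of colours used is then at most $(1+\eps)\Delta$.

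The step I expect to be the real obstacle is keeping enough available colours throughout the process. A naive analysis shows that, for $k\ge 2$, the lists $L(e)$ tend to shrink by a larger factor per round than the degrees do, so the invariant ``$|L(e)|$ comfortably exceeds the current maximum uncoloured degree'' is \emph{not} preserved by the unrefined procedure. Overcoming this requires a more careful design — for instance, equalizing list sizes after each round and/or guiding the random colour choices by a near-optimal \emph{fractional} edge colouring (which exists with total weight $(1+o(1))\Delta$ for near-regular linear hypergraphs) in the spirit of Kahn's rounding argument. This refined bookkeeping is precisely the content of the Pippenger--Spencer theorem, which is why we quote it as a black box rather than reproducing the argument; see \cite{PS:89}.
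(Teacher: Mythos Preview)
The paper does not prove this theorem at all: it is quoted as a classical result of Pippenger and Spencer, and the only argument the paper supplies is the one-line observation that the version for arbitrary linear hypergraphs follows from the original (regular) Pippenger--Spencer statement because every linear $k$-uniform hypergraph embeds into a \emph{regular} linear $k$-uniform hypergraph with the same maximum degree. Your reduction to the near-regular case is in the same spirit, and your final sentence --- that one should cite \cite{PS:89} as a black box --- is exactly what the paper does.

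Your nibble sketch is a reasonable outline of how the Pippenger--Spencer theorem is actually proved, and your diagnosis of the main difficulty (lists shrinking faster than degrees, requiring equalisation or a Kahn-type fractional guide) is accurate. But since the paper itself offers no proof beyond the embedding remark, there is nothing to compare your sketch against; you have simply written more than the paper does. If anything, your padding construction (attaching copies of a fixed almost-regular gadget at low-degree vertices) is slightly more elaborate than needed --- the paper is content with the bare assertion that such an embedding exists --- but this is harmless.
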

	
	Pippenger and Spencer originally formulated this result in the regular setting. However, since every linear $k$-uniform hypergraph can be embedded into a regular linear $k$-uniform hypergraph with the same maximum degree, the above version readily follows. Much stronger results have been proved subsequently~\cite{kahn:96,MR:00}.
	
	From this, we can quite easily deduce the variant of Pippenger's theorem which we need. 
	
	\begin{theorem}\label{thm:nibble}
		For any $\delta>0$ and $k\in \bN$ there exists $D_0$ such that the following holds for all $D\ge D_0$.
		Let $\cH$ be a linear $k$-uniform hypergraph with maximum degree at most~$D$. Let $U\In V(\cH)$ be a set of vertices all having degree at least $(1- \delta)D$. 
		Then there exists a matching in $\cH$ which covers all but $2\delta|U|$ of the vertices of~$U$.
	\end{theorem}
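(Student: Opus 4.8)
The plan is to derive Theorem~\ref{thm:nibble} from Theorem~\ref{thm:chromatic index} by a double-counting argument over the matchings in a near-optimal proper edge colouring of $\cH$. First I would set $\eps>0$ small compared to $\delta$ and take $D_0$ large enough that Theorem~\ref{thm:chromatic index} applies with this $\eps$ and the given $k$. Since $\cH$ is linear $k$-uniform with maximum degree at most $D$, that theorem partitions $E(\cH)$ into a family $\cM_1,\dots,\cM_s$ of matchings with $s\le (1+\eps)D$. The idea is then simply that the \emph{best} of these matchings must cover most of~$U$.

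\medskip
To make this precise, I would count pairs $(v,i)$ with $v\in U$ and $v$ \emph{not} covered by $\cM_i$. For a fixed $v\in U$, the number of indices $i$ for which $\cM_i$ contains an edge through $v$ equals the number of distinct matchings used on the $\dg(v)\ge (1-\delta)D$ edges at $v$; since the $\cM_i$ partition $E(\cH)$ and any matching contains at most one edge at $v$, exactly $\dg(v)$ of the matchings cover~$v$. Hence the number of indices $i$ with $v$ uncovered by $\cM_i$ is $s-\dg(v)\le (1+\eps)D-(1-\delta)D=(\delta+\eps)D$. Summing over $v\in U$, the total number of uncovered pairs is at most $(\delta+\eps)D\,|U|$. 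Averaging over the $s\ge D$ matchings (any lower bound $s\ge(1-\eps)D$ would do, but $s\le(1+\eps)D$ with $s$ a partition of a nonempty edge set gives $s\ge 1$; more carefully one notes $s\ge \Delta(\cH)$, and if $U\ne\emptyset$ then $\Delta(\cH)\ge(1-\delta)D$), there exists an index $i$ for which $\cM_i$ leaves at most $\frac{(\delta+\eps)D\,|U|}{s}$ vertices of $U$ uncovered. Using $s\ge(1-\delta)D$ (valid when $U\ne\emptyset$, else the statement is trivial) this is at most $\frac{(\delta+\eps)}{1-\delta}|U|$, which is at most $2\delta|U|$ provided $\eps$ and $\delta$ are small enough; for definiteness choosing $\eps\le\delta$ and assuming $\delta\le 1/4$ gives $\frac{\delta+\eps}{1-\delta}\le\frac{2\delta}{3/4}<2\delta$ — one adjusts the constant bookkeeping so the bound is genuinely $\le 2\delta|U|$ (if $\delta>1/4$ the claim $2\delta|U|\ge|U|$ is vacuous since we can take the empty matching).

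\medskip
I do not expect any serious obstacle here: the only subtlety is getting the arithmetic to land exactly on the clean bound $2\delta|U|$ rather than some messier expression, which is handled by choosing $\eps$ sufficiently small relative to $\delta$ and separately disposing of the (trivial) case $\delta\ge 1/2$ where the empty matching already works. The genuine content — that a linear bounded-degree hypergraph has a small chromatic index — is entirely imported from Theorem~\ref{thm:chromatic index}, so this lemma is really just a repackaging: instead of asking for one almost-perfect matching in a regular hypergraph, we extract from the colour classes one matching that is almost-perfect on the prescribed high-degree set~$U$.
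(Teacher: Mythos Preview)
Your approach is essentially the same as the paper's: apply Theorem~\ref{thm:chromatic index} to decompose $E(\cH)$ into few matchings, then average to find one that covers most of~$U$. The paper's bookkeeping is slightly cleaner: it applies Theorem~\ref{thm:chromatic index} directly with $\eps=\delta$, counts \emph{covered} rather than uncovered vertices, and uses only the upper bound $t\le(1+\delta)D$ to get $\frac{1}{t}\sum_i X_i \ge \frac{(1-\delta)D|U|}{(1+\delta)D}\ge(1-2\delta)|U|$, avoiding any need for a lower bound on the number of colour classes.

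One genuine slip in your write-up: the displayed inequality $\frac{2\delta}{3/4}<2\delta$ is false (the left side is $\tfrac{8}{3}\delta$), and similarly ``$\delta>1/4$ makes $2\delta|U|\ge|U|$'' should read $\delta\ge 1/2$. Your argument is still salvageable exactly as you suggest --- for $\delta<1/2$ one has $\frac{\delta}{1-\delta}<2\delta$, so choosing $\eps$ small enough gives $\frac{\delta+\eps}{1-\delta}\le 2\delta$, and for $\delta\ge 1/2$ the empty matching suffices --- but the stated numerics need correcting.
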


	\begin{proof}
		By Theorem~\ref{thm:chromatic index}, there are matchings $M_1,\dots,M_t$ which partition $E(\cH)$ such that $t\le (1+\delta)D$. For every $i\in [t]$ 
		define $X_i=\sum_{e\in M_i}|e\cap U|$ which is the number of vertices in $U$ that are covered by~$M_i$. Then we have 
		$$ \frac{1}{t} \sum_{i=1}^{t} X_i=\frac{1}{t} \sum_{e\in E(\cH)} |e\cap U| =\frac{1}{t}\sum_{u\in U}d_{\cH}(u) \ge \frac{(1-\delta)D|U|}{(1+\delta)D} \ge (1-2\delta)|U|.$$
		Hence, there is $i\in [t]$ for which $X_i\ge (1-2\delta)|U|$, which gives us the desired matching.
	\end{proof}
	
	Using Theorem~\ref{thm:nibble}, we can deduce the following result which gives us an almost-perfect rainbow matching.
	
	\begin{cor}\label{cor:approx}
		For any $t\in \bN$ and $\eps>0$ there exists $d_0$ such that the following holds for all $d\ge d_0$. Let $G$ be a graph with a proper linear $t$-fold edge colouring. Assume that all vertices have degree $(1\pm \eps)d$ and all colours have degree at most $(1+\eps)d$. Then there exists a rainbow matching which covers all but $4\eps |V(G)|$ vertices.
	\end{cor}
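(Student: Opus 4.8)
The plan is to translate the statement into one about matchings in a linear hypergraph and then invoke Theorem~\ref{thm:nibble}. Let $\cK$ denote the set of all colours used by the $t$-fold colouring. I would build a $(t+2)$-uniform hypergraph $\cH$ with vertex set $V(G)\cup \cK$ in which every edge $e=uv\in E(G)$, say with colour set $\{c_1,\dots,c_t\}$, contributes the $(t+2)$-element hyperedge $\{u,v,c_1,\dots,c_t\}$. By design, a collection of hyperedges of $\cH$ is a matching exactly when the corresponding edges of $G$ are pairwise vertex-disjoint and have pairwise disjoint colour sets, i.e.\ form a rainbow matching of $G$; moreover the subset of $U:=V(G)$ covered by such a matching is precisely the set of $G$-vertices covered by that rainbow matching. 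So a matching of $\cH$ missing few vertices of $U$ is exactly what we want.

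Next I would verify that $\cH$ is linear, which is where the hypotheses on the colouring enter. Given two vertices $x,y$ of $\cH$, count the hyperedges of $\cH$ containing both. If $x,y\in V(G)$, any such hyperedge arises from an edge of $G$ joining them, of which there is at most one since $G$ is simple. If $x=u\in V(G)$ and $y=c\in\cK$, any such hyperedge arises from an edge of $G$ at $u$ whose colour set contains $c$, of which there is at most one since the colouring is proper. If $x,y\in\cK$, any such hyperedge arises from an edge of $G$ whose colour set contains both $x$ and $y$, of which there is at most one since the colouring is linear. Hence $\cH$ is a linear $(t+2)$-uniform hypergraph, and each of the three assumptions ($G$ simple, colouring proper, colouring linear) rules out exactly one of the three overlap types.

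Finally I would apply Theorem~\ref{thm:nibble} with $k:=t+2$, $\delta:=2\eps$, and $D:=(1+\eps)d$. The maximum degree of $\cH$ is at most $D$: the $\cH$-degree of $u\in V(G)$ equals $\dg_G(u)\le (1+\eps)d$, and the $\cH$-degree of a colour equals its colour degree, which is at most $(1+\eps)d$ by hypothesis. Every $u\in U$ satisfies $\dg_G(u)\ge (1-\eps)d$, and since $(1-\eps)/(1+\eps)\ge 1-2\eps$ this is at least $(1-\delta)D$, so the condition on $U$ holds; choosing $d_0$ large enough that $D\ge D_0$ makes the theorem applicable. It then yields a matching of $\cH$ covering all but $2\delta|U|=4\eps|V(G)|$ vertices of $U$, which is the required rainbow matching of $G$. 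I do not anticipate any serious obstacle: the only slightly delicate point is the linearity check above, and the parameter bookkeeping reduces to the elementary inequality $(1-\eps)/(1+\eps)\ge 1-2\eps$.
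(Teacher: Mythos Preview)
Your proposal is correct and follows essentially the same approach as the paper: build the $(t+2)$-uniform hypergraph on $V(G)\cup\cK$, verify linearity via the same three-case split, and apply Theorem~\ref{thm:nibble} with $D=(1+\eps)d$ and $\delta=2\eps$. The only cosmetic difference is that you make the inequality $(1-\eps)/(1+\eps)\ge 1-2\eps$ explicit, whereas the paper leaves it implicit.
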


	\begin{proof}
		Define the following hypergraph~$\cH$. The vertex set of $\cH$ is $V(G) \cup C$, where $C$ is the set of all colours.
		For every edge $e=uv\in E(G)$, we define the hyperedge $f_e=\{u,v\}\cup C_e$, where $C_e$ is the colour set of~$e$. Hence, $\cH$ is $(t+2)$-uniform.
		
		First, observe that $\cH$ is linear. Indeed, for any distinct $u,v\in V(G)$, if $uv\notin E(G)$, then there is no hyperedge containing $u$ and $v$, and if $uv\in E(G)$, then $f_{uv}$ is the only hyperedge containing $u$ and~$v$. For $u\in V(G)$ and $c\in C$, there is at most one hyperedge containing $u$ and $c$ because the colouring is proper, and for distinct $c,c'\in C$, there is at most one hyperedge containing $c$ and $c'$ since the colouring is linear.
		
		Next, the maximum degree of $\cH$ is at most $(1+\eps)d$ by assumption, and all vertices in $V(G)$ have degree at least $(1-\eps)d$. Apply Theorem~\ref{thm:nibble} (with $D=(1+\eps)d$ and $\delta=2\eps$) to find a matching $\cM$ in $\cH$ which covers all but $4\eps|V(G)|$ vertices of~$V(G)$. Define $M=\set{e\in E(G)}{f_e\in \cM}$. Clearly, $M$ is a matching in $G$ and covers all but $4\eps|V(G)|$ vertices of~$V(G)$. Moreover, for any two edges $e,e'\in M$, their colour sets are disjoint since $f_e$ and $f_{e'}$ are both in~$\cM$. Hence, $M$ is rainbow.
	\end{proof}

	It remains to prove that this almost-perfect matching can be turned into a perfect rainbow matching.
	The crucial assumption to achieve this is that the degrees of the colours are significantly smaller than those of the vertices. This means that we can ``reserve'' some colours which will not be used for the almost-perfect matching, and then in the second step we can use these reserved colours to increase the size of the matching. 
	
	We next prove the tool which allows us to ``split'' the colours, that is, to set aside a set of colours for the final step. This is based on an important observation of Alon, Pokrovskiy and Sudakov~\cite{APS:17}. They showed that, given a properly edge coloured $K_n$, if one chooses every colour class independently with probability $p$, then the obtained subgraph has the property that between any two large enough sets $A,B$ there are at least $(1-o(1))p|A||B|$ edges.
	Building on their ideas, but using a slightly different analysis, we prove the following general tool, which also provides upper bounds on the number of chosen edges.
	
	\begin{lemma}\label{lem:colour split}
		Let $G$ be an $n$-vertex graph with a proper $t$-fold edge colouring.
		Choose every colour independently with probability $p$, and let $G'$ be the (random) subgraph consisting of all the edges~$e$ such that all the colours in the colour set of $e$ have been chosen.
		Then with high probability, for any $m$ and any disjoint sets $A,B\In V(G)$ of size at least $m$, we have that
		$$|e_{G'}(A,B) - p^t e_G(A,B)|\le 9tm^{-1/5}|A||B|\sqrt{\log n}.$$
	\end{lemma}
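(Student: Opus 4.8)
The plan is to prove the concentration bound by first reducing to a single pair $(A,B)$ and then taking a union bound over all relevant pairs. The key observation is that for a fixed pair of disjoint sets $A,B$ and a fixed edge $e \in e_G(A,B)$, the indicator that $e$ survives in $G'$ is the product of the indicators that each of its $t$ colours is chosen, so $\prob{e \in E(G')} = p^t$ (here we use that the colouring is proper, so the $t$ colours on $e$ are distinct, and the colour choices are independent). Thus $\expn{e_{G'}(A,B)} = p^t e_G(A,B)$, and the task is purely one of concentration.

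The main obstacle is that the random variable $e_{G'}(A,B) = \sum_{e \in e_G(A,B)} \IND[e \in E(G')]$ is \emph{not} a sum of independent variables — two edges sharing a colour are positively correlated, and an edge has $t$ colours each shared with up to $n$ other edges. I would handle this by bounding the variance directly: $\var(e_{G'}(A,B)) = \sum_{e,e'} \big(\prob{e,e' \in E(G')} - p^{2t}\big)$, where the sum is over ordered pairs of edges in $e_G(A,B)$. If $e,e'$ share exactly $j$ colours (with $0 \le j \le t$; by properness they can share at most $t$ and, since they are distinct edges between the same two parts… actually they need not share a vertex here, so $j$ can be up to $t$), then $\prob{e,e' \in E(G')} = p^{2t-j}$, giving a contribution $p^{2t-j}-p^{2t} \le p^{2t-j}$. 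For $j \ge 1$ the number of such pairs is at most (number of edges)$\times$(ways to pick $j$ shared colours)$\times$(for each colour, the number of other edges in $A \times B$ with that colour). Since the colouring is proper, each colour has at most $\min\{|A|,|B|\}$ edges inside $A \times B$ — wait, more carefully, a colour's edges form a matching, so at most $\min\{|A|,|B|\} \le |A|$ edges of a given colour lie between $A$ and $B$. Summing over $j=1,\dots,t$ this yields $\var(e_{G'}(A,B)) = O_t\big(|A||B| \cdot \max\{|A|,|B|\}\big) = O_t(n |A||B|)$, which is too weak on its own for a sub-exponential tail but is the right starting point.

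To get the stated bound with the $m^{-1/5}$ factor and a $\sqrt{\log n}$ (rather than polynomial) error probability, I would not use Chebyshev but instead apply a bounded-differences / Azuma-type argument on the colour exposure, or better, the polynomial concentration inequality of Kim--Vu (or the simpler Talagrand-type bound for low-degree polynomials with bounded influences): $e_{G'}(A,B)$ is a polynomial of degree $t$ in the independent Bernoulli$(p)$ colour-indicators, each with bounded partial derivatives (changing one colour from present to absent changes $e_{G'}(A,B)$ by at most the number of edges between $A,B$ using that colour, which is $\le |A|$). With $|A|,|B| \ge m$ and at most $n$ colours, Kim--Vu gives that $|e_{G'}(A,B) - p^t e_G(A,B)|$ exceeds $O(t)\sqrt{\log n}\cdot(\text{something})^{1/2}\cdot \max\text{-derivative}$-type quantities with probability $n^{-\omega(1)}$; tuning the exponents, one reaches the claimed $9t m^{-1/5}|A||B|\sqrt{\log n}$ bound (the exponent $1/5$ absorbs the loss from iterated derivatives in Kim--Vu). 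Finally, I would take a union bound over all $m \ge 1$ and all pairs $A,B$: for each $m$ there are at most $4^n$ pairs, and the failure probability for each is $n^{-\omega(1)} \cdot$ (room to spare beyond $4^n$), because the per-pair bound can be made of the form $\exp(-\Omega(m^{1/5}\sqrt{\log n} \cdot \text{poly}))$ which for $m$ large dominates $4^n$, while for $m$ small the statement $|e_{G'}(A,B)-p^t e_G(A,B)| \le 9tm^{-1/5}|A||B|\sqrt{\log n}$ becomes vacuous since the right side exceeds $|A||B| \ge e_G(A,B)$ once $9t m^{-1/5}\sqrt{\log n} \ge 1$, i.e. for $m = O((\log n)^{5/2})$. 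So only finitely-growing-many values of $m$ (namely $m \gtrsim (\log n)^{5/2}$) need a genuine union bound, and the total failure probability is $o(1)$.

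I expect the delicate point to be matching the constant $9t$ and the exponent $1/5$ to what a concrete concentration inequality actually delivers; the honest approach, following Alon--Pokrovskiy--Sudakov~\cite{APS:17}, is probably to expose colours one at a time and run a Freedman/Azuma martingale argument with carefully chosen bounded differences, splitting the edge set of $G[A,B]$ by colour-degree to control the quadratic variation — this is the technical heart and where the stated exponents get pinned down.
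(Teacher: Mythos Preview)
Your proposal has a genuine gap: neither Azuma on colour exposure nor Kim--Vu gives a failure probability that survives the union bound over all $4^n$ pairs $(A,B)$. Concretely, flipping one colour changes $e_{G'}(A,B)$ by at most $c_i:=$ (number of edges between $A,B$ using that colour) $\le \min\{|A|,|B|\}$, and $\sum_i c_i = t\cdot e_G(A,B)$, so $\sum_i c_i^2 \le t\min\{|A|,|B|\}\,|A||B|$. Azuma then gives failure probability at most $\exp\bigl(-\Omega(t\,m^{-2/5}\max\{|A|,|B|\}\log n)\bigr)$; for $|A|=|B|=m$ this is $\exp(-\Omega(m^{3/5}\log n))$, which is far too weak against the $\binom{n}{m}^2$ choices once $m$ is polynomial in~$n$. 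Kim--Vu is even less suited here, as it typically yields only polynomially small failure probability. Your observation that the bound is vacuous for $m=O((\log n)^{5/2})$ does not help, since the problematic regime is \emph{large}~$m$. You implicitly acknowledge this gap (``I expect the delicate point to be\dots''), but no amount of martingale bookkeeping on the full pair $(A,B)$ will close it.

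The paper's proof uses two ideas you are missing. First, for \emph{any} fixed $A',B'$, one deletes one edge from each pair of edges sharing a colour, obtaining a rainbow subgraph; on this subgraph the surviving-edge indicators are genuinely independent, so plain Chernoff gives deviation at most $Z_{A',B'}+2\sqrt{(|A'|+|B'|)|A'||B'|\log n}$ with failure probability $n^{-\Omega(|A'|+|B'|)}$, which \emph{does} beat the union bound over all subsets. Second --- and this is where the exponent $1/5$ comes from --- one does not apply this directly to $(A,B)$, but first \emph{randomly partitions} $A$ into $r\approx |A|^{3/5}$ parts and $B$ into $s\approx |B|^{3/5}$ parts. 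A first-moment argument shows the partition can be chosen so that $\sum_{i,j}Z_{A_i,B_j}\le tm^{-1/5}|A||B|$ (using that each colour class is a matching of size $\le \sqrt{|A||B|}\le m^{-1/5}rs$), and summing the Chernoff errors over the $rs$ pieces gives another $O(m^{-1/5}|A||B|\sqrt{\log n})$. The partitioning trick is the technical heart; your variance computation is in the right spirit but is only the warm-up to it.
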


	\begin{proof}
		For two disjoint sets $A',B'\In V(G)$, we define $Z_{A',B'}$ as the number of pairs of distinct edges $e,e'\in E(G[A',B'])$ whose colour sets are not disjoint.
		
		We claim that with high probability, for any disjoint sets $A',B'\In V(G)$, we have that 
		\begin{align}
			|e_{G'}(A',B') - p^t e_{G}(A',B')| \le Z_{A',B'} + 2 \sqrt{(|A'|+|B'|)|A'||B'|\log n}.\label{individual chernoff}
		\end{align}
		Indeed, fix $A',B'$ and set $a=|A'|$ and $b=|B'|$. By deleting an edge from every pair of edges that have overlapping colour sets, we obtain a rainbow subgraph of $G[A',B']$ with at least $e_{G}(A',B') - Z_{A',B'}$ edges. Let $X$ be the random variable counting the number of these edges which are in~$G'$. Since $X$ is a sum of at most $ab$ independent Bernoulli variables, the Chernoff bound (Lemma~\ref{lem:chernoff}) implies that
		$$\prob{|X-\expn{X}|\ge \lambda} \le 2\exp(-2\lambda^2/ab),$$ for any $\lambda\ge 0$. Set  $\lambda=2 \sqrt{(a+b)ab\log n}$. Since $X\le e_{G'}(A',B')\le X+ Z_{A',B'}$ and $p^t(e_{G}(A',B')-Z_{A',B'})\le \expn{X}\le p^te_{G}(A',B')$, we have $$|e_{G'}(A',B') - p^t e_{G}(A',B')|\le |X-\expn{X}| + Z_{A',B'}.$$ Thus, we see that the probability that \eqref{individual chernoff} is not satisfied is at most $2n^{-8(a+b)}$.
		Since the number of choices for $A'$ and $B'$ with $|A'|=a$ and $|B'|=b$ is at most $n^{a+b}$, a union bound (also over the $n^2$ choices for $a$ and $b$) implies the claim.

		It now suffices to show that~\eqref{individual chernoff} implies the conclusion of the lemma.
		To this end, consider arbitrary disjoint sets $A,B\In V(G)$ of size at least~$m\ge 1$. Set $r=\lceil {|A|}^{3/5}\rceil$ and $s=\lceil {|B|}^{3/5}\rceil$.
		We claim that there exist a partition of $A$ into sets $A_1,\dots,A_r$ and a partition of $B$ into sets $B_1,\dots,B_s$ such that
		\begin{align}
			\sum_{i=1}^r \sum_{j=1}^s Z_{A_i,B_j} \le tm^{-1/5} e_G(A,B)\le tm^{-1/5}|A||B|.\label{partition}
		\end{align}
		Indeed, for each vertex in $A$ and $B$, respectively, put it in one of the sets $A_i$ and $B_i$ uniformly at random, all independently.
		Then $$\expn{\sum_{i=1}^r \sum_{j=1}^s Z_{A_i,B_j}} = \frac{1}{rs}Z_{A,B} \le \frac{1}{rs} \sum_c \binom{|E_c|}{2} \le \frac{\min(|A|,|B|)}{rs}\sum_c |E_c| \le m^{-1/5} te_G(A,B) ,$$
		where the sum is over all colours $c$ and $E_c$ denotes the set of edges in $G[A,B]$ whose colour set contains~$c$. Here, we have used the assumption that the colouring is proper, which means that pairs of edges with overlapping colour sets do not share a vertex. In particular, for any colour~$c$, we have $|E_c|\le \min(|A|,|B|) \le \sqrt{|A||B|}\le m^{-1/5}rs$.
		
		We can conclude that the desired partitions exist. Finally, invoking~\eqref{individual chernoff}, we have
		\begin{align*}
			|e_{G'}(A,B) - p^t e_G(A,B)| &\le \sum_{i=1}^r \sum_{j=1}^s  \left|e_{G'}(A_i,B_j)-p^te_G(A_i,B_j) \right| \\
			&\le \sum_{i=1}^r \sum_{j=1}^s Z_{A_i,B_j} +2\sqrt{\log n} \sum_{i=1}^r \sum_{j=1}^s \sqrt{(|A_i|+|B_j|)|A_i||B_j|}\\
			&\le 9tm^{-1/5}|A||B|\sqrt{\log n},
		\end{align*}
		where the last inequality holds by~\eqref{partition} and since
		\begin{align*}
			\sum_{i=1}^r \sum_{j=1}^s \sqrt{(|A_i|+|B_j|)|A_i||B_j|} &\le \sum_{i=1}^r \sqrt{|A_i|}\sum_{j=1}^s |B_j| +  \sum_{j=1}^s \sqrt{|B_j|}\sum_{i=1}^r |A_i| \\
			& \le |B|\cdot r\sqrt{|A|/r} + |A|\cdot s\sqrt{|B|/s}\\
			&\le 4m^{-1/5}|A||B|
		\end{align*}
	(note that $\sqrt{r/|A|},\sqrt{s/|B|} \le 2 m^{-1/5}$ by definition of $r,s$).
	\end{proof}

	With the tools above at hand, we can now prove the rainbow matching lemma.
	
	\lateproof{Lemma~\ref{lem:rainbow matching}}
	We will assume that $\eps>0$ is sufficiently small and $n$ is sufficiently large.
	Note that condition \ref{cond dense} implies that $e(G)\ge \alpha n^2$ and hence $d\ge \alpha n/2$.
	
	Set $p_0:=1-\alpha/t$ and $p_i=\alpha/5t$ for $i\in [5]$.
	We first split the colours randomly as follows: for each colour independently, choose a label $i\in \{0,1,2,3,4,5\}$ at random by choosing $i$ with probability~$p_i$. 
	Let $G_i$ be the random subgraph of $G$ consisting of all edges whose colour set has been completely labelled with~$i$.
	
	We first record some important properties of these subgraphs. 
	With high probability, for every vertex $u$ and each label $i$, we have
	\begin{align}
		d_{G_i}(u)=(1\pm 2\eps) p_i^t d. \label{split degrees}
	\end{align}
	This is because $d_{G_i}(u)$ is the sum of $d_G(u)\le n$ independent Bernoulli variables and its expectation is $p_i^t d_G(u)$, so the Chernoff bound (Lemma~\ref{lem:chernoff}) implies that 
	$$\prob{|d_{G_i}(u) - p_i^t d_G(u)| \ge n^{2/3} } \le 2\exp(-2n^{1/3}).$$
	
	Moreover, by Lemma~\ref{lem:colour split}, with high probability, 
	for all $i$ and all sets $A'\In A$ and $B'\In B$ of size at least $\eps n$, we have
	\begin{align}
		e_{G_i}(A',B') = p_i^t e_G(A',B') +O(n^{11/6}). \label{split edge sets}
	\end{align}

	By the probabilistic method, there exist graphs $G_0,\dots,G_5$ which have all of the above properties.
	We will find in $G_0$ an almost perfect rainbow matching, and then use $G_1,\dots,G_5$ to turn this into a perfect rainbow matching.
	For the second step, we will need the following ``expansion'' property of the graphs~$G_i$. (This is the property which allows us to have the asymptotically optimal size condition in~\ref{cond dense}.)
	
	\begin{enumerate}[label=($\dagger$)]
		\item For any set $S\In A$ (and similarly any $S\In B$) of size at least $\eps n$, there are at least $(1-\alpha/2)d$ vertices which have at least $\eps^{1/3} |S|$ neighbours in~$S$.\label{dagger}
	\end{enumerate}
	
	To see this, let $T^-$ be the set of vertices which have fewer than $\alpha p_i^t d |S|/10n$ neighbours in~$S$, and let $T^+$ be the set of vertices which have more than $(1+\eps) p_i^t |S|$ neighbours in~$S$.
	By~\eqref{split degrees}, there are at least $(1-2\eps)p_i^t d |S|$ edges going out of~$S$. By definition of $T^-$, at most $n\cdot \alpha p_i^t d |S|/10n$ of these edges go to a vertex in~$T^-$.
	Moreover, we have $|T^+|\le \eps n$, as otherwise we can use~\eqref{split edge sets} in bounding 
	\begin{align*}
		|T^+| \cdot (1+\eps)p_i^t |S| \le e_{G_i}(T^+,S) \le p_i^t |T^+||S| + O(n^{11/6})
	\end{align*}
	to reach a contradiction.
	Hence, at least $(1-\alpha/5)p_i^t d|S|$ of the edges leaving $S$ go to vertices which are neither in $T^-$ nor in $T^+$. Since each vertex not in $T^+$ has at most $(1+\eps)p_i^t |S|$ neighbours in~$S$, there must be at least $$\frac{(1-\alpha/5)p_i^t d|S|}{(1+\eps)p_i^t |S|}\ge (1-\alpha/2)d$$ vertices where these edges go to. Each of these vertices is not in $T^-$ and hence has at least $\alpha p_i^t d |S|/10n\ge \eps^{1/3}|S|$ neighbours in~$S$. \hfill $\Box$
	
	Now, we begin with the construction of the matching.
	In the first step, we find an almost-perfect rainbow matching in~$G_0$.
	By~\eqref{split degrees}, we know that every vertex has degree $(1\pm 2\eps)p_0^td$ in~$G_0$. By condition~\ref{cond colour degrees}, all colours have degree at most $(1-\alpha)d$.
	Since $p_0^t d=(1-\alpha/t)^t d \ge (1-\alpha)d$, we can apply Corollary~\ref{cor:approx} to find a rainbow matching $M_0$ of size at least $(1-8\eps)n$.

	Now, we want to turn $M_0$ into a perfect rainbow matching. We will achieve this iteratively by increasing the size of the matching in each step by one.
	More precisely, we show that for all $\ell\in \{0,\dots,n-|M_0|\}$, there exists a rainbow matching $M_\ell$ of size $|M_0|+\ell$ in $G$ which contains at most $5\ell$ edges outside~$G_0$.
	Clearly, this is true for $\ell=0$, and ultimately $M_{n-|M_0|}$ will be a perfect rainbow matching in $G$ as desired.
	Thus, it suffices to show that given $M_\ell$ as above for some $\ell<n-|M_0|$, we can find $M_{\ell+1}$.
	
	We accomplish this by finding a short augmenting path. 
	Call a sequence $v_1,v_2,\dots,v_{2k}$ of vertices \defn{augmenting} if the following holds:
	\begin{itemize}
		\item $v_1$ and $v_{2k}$ are distinct and not covered by $M_\ell$;
		\item $\set{v_{2j}v_{2j+1}}{j\in [k-1]} \In M_\ell$;
		\item the edges in $\set{v_{2j-1}v_{2j}}{j\in [k]}\In E(G)$ are pairwise colour-disjoint and also colour-disjoint from all edges in~$M_\ell$.
	\end{itemize}
	Observe that if we have such a sequence, we can increase the size of our rainbow matching by one. For this, it is first helpful to notice that, by taking a shortest such sequence, we can assume that the vertices $v_1,v_2,\dots,v_{2k}$ are distinct. Indeed, if $v_j=v_{j'}$, then as $G$ is bipartite, $j$ and $j'$ have the same parity and hence the sequence could be shortened while preserving the augmenting property.
	When $v_1,v_2,\dots,v_{2k}$ are distinct, then it is clear that removing $\set{v_{2j}v_{2j+1}}{j\in [k-1]}$ from $M_\ell$ and adding $\set{v_{2j-1}v_{2j}}{j\in [k]}$ yields a rainbow matching of size $|M_\ell|+1$.
	
	We will now find an augmenting sequence of length 10, thus ensuring that at most 5 edges from outside $G_0$ are added to the matching. 
	The $5$ augmenting edges will come from different $G_i$'s and will thus be pairwise colour-disjoint. We also have to guarantee that they do not have a colour which is already used. Therefore,
	call a colour \defn{blocked} if it is contained in the colour set of some edge in~$M_\ell$.
	Since the inductive hypothesis gives us that $M_\ell$ contains at most $5\ell$ edges outside $G_0$, at most $5t\ell $ colours with a non-zero label are blocked. Call an edge \defn{blocked} if its colour set contains a blocked colour. Since the colouring is proper, for every vertex $u$ and each $i\in[5]$, at most $5t\ell \le 5t \cdot 8\eps n\le \sqrt{\eps} n$ edges of $G_i$ at $u$ are blocked.
	This means that the graphs $G_i$, after removing blocked edges, still have the properties \eqref{split degrees}, \eqref{split edge sets} and \ref{dagger}, up to slightly changing the parameters.

	Let $a\in A$ and $b\in B$ be two arbitrary vertices which are not covered by~$M_{\ell}$. 
	Define $B_1$ as the set of all $b_1\in N_{G_1}(a)$ such that $ab_1$ is not blocked and $b_1$ is covered by~$M_\ell$. Note that we have $$|B_1|\ge d_{G_1}(a)-\sqrt{\eps} n - 8\eps n  \ge p_1^t d/2.$$ 
	Further, let $A_2$ be the set of vertices which are matched to the vertices in $B_1$ by~$M_\ell$.
	
	Next, let $B_3$ be the set of all vertices $b_3$ for which there exists $a_2\in A_2$ such that $a_2b_3\in E(G_3)$ is not blocked and $b_3$ is covered by $M_\ell$. 
	We claim that $|B_3|\ge (1-\alpha)d$. Indeed, since $|A_2|=|B_1|\ge \eps n$, by \ref{dagger}, there exists a set $B_3'$ of size at least $(1-\alpha/2)d$ such that every $b_3\in B_3$ has at least $\eps^{1/3} |A_2|$ neighbours in~$A_2$. For each such vertex $b_3$, at most $\sqrt{\eps}n$ edges of $G_3$ are blocked, and hence there exists some $a_2\in A_2$ such that $a_2b_3\in E(G_3)$ is not blocked. By removing at most $8\eps n$ vertices from $B_3'$ which are not covered by $M_\ell$, we find the set~$B_3$ with the required properties.
	
	Finally, let $A_4$ be the set of vertices which are matched to the vertices in $B_3$ by~$M_\ell$.
	With exactly the same argument, starting from $b$, we find sets $A_1,B_2,A_3,B_4$ with the analogous properties, where $G_1$ is replaced by $G_2$ and $G_3$ is replaced by~$G_4$.
	
	\begin{figure}[ht]
		\begin{center}
			\begin{tikzpicture}
				
				\coordinate (a) at (-1,1);
				\coordinate (b) at (-1,-1);
				\coordinate (a1) at (1.4,1);
				\coordinate (b1) at (1.4,-1);
				\coordinate (a2) at (3.4,1);
				\coordinate (b2) at (3.4,-1);
				\coordinate (a3) at (6.7,1);
				\coordinate (b3) at (7,-1);
				\coordinate (a4) at (11,1);
				\coordinate (b4) at (10.7,-1);
				
				\draw (1,0.8) rectangle (2,1.2);
				\draw (3,0.8) rectangle (4,1.2);
				\draw (6,0.8) rectangle (8,1.2);
				\draw (10,0.8) rectangle (12,1.2);
				
				\draw (1,-0.8) rectangle (2,-1.2);
				\draw (3,-0.8) rectangle (4,-1.2);
				\draw (6,-0.8) rectangle (8,-1.2);
				\draw (10,-0.8) rectangle (12,-1.2);
				
				\begin{scope}[opacity=0.5]
					\draw[fill,red!30] (a) -- (1,-0.8) -- (2,-0.8) -- (a);
					\draw[fill,Cyan!30] (b) -- (1,0.8) -- (2,0.8) -- (b);
					
					\draw[fill,orange!30] (3,0.8) -- (6,-0.8) -- (8,-0.8) -- (4,0.8)--cycle;
					\draw[fill,Purple!30] (3,-0.8) -- (6,0.8) -- (8,0.8) -- (4,-0.8)--cycle;
					
					\draw[fill,ForestGreen!30] (10,0.8) -- (12,0.8) -- (12,-0.8) -- (10,-0.8)--cycle;
				\end{scope}
				
				\draw[line width=1.5pt] (a1)--(b2);
				\draw[line width=1.5pt] (b1)--(a2);
				\draw[line width=1.5pt] (a3)--(b4);
				\draw[line width=1.5pt] (b3)--(a4);

				\draw[line width=1.5pt,red] (a)--(b1);
				\draw[line width=1.5pt,orange] (a2)--(b3);
				\draw[line width=1.5pt,ForestGreen] (a4)--(b4);
				\draw[line width=1.5pt,Purple] (a3)--(b2);
				\draw[line width=1.5pt,Cyan] (a1)--(b);
				
				\draw[fill] (a) circle (2pt);
				\draw[fill] (b) circle (2pt);
				\draw[fill] (a1) circle (2pt);
				\draw[fill] (b1) circle (2pt);
				\draw[fill] (a2) circle (2pt);
				\draw[fill] (b2) circle (2pt);
				\draw[fill] (a3) circle (2pt);
				\draw[fill] (b3) circle (2pt);
				\draw[fill] (a4) circle (2pt);
				\draw[fill] (b4) circle (2pt);
				
				\node at ($(a)+(0,0.5)$) {$a$};
				\node at ($(b)+(0,-0.5)$) {$b$};
				\node at ($(a1)+(0,0.5)$) {$a_1$};
				\node at ($(b1)+(0,-0.5)$) {$b_1$};
				\node at ($(a2)+(0,0.5)$) {$a_2$};
				\node at ($(b2)+(0,-0.5)$) {$b_2$};
				\node at ($(a3)+(0,0.5)$) {$a_3$};
				\node at ($(b3)+(0,-0.5)$) {$b_3$};
				\node at ($(a4)+(0,0.5)$) {$a_4$};
				\node at ($(b4)+(0,-0.5)$) {$b_4$};
				
				\node at ($(a)+(-0.8,0)$) {$A$};
				\node at ($(b)+(-0.8,0)$) {$B$};
			\end{tikzpicture}
		\end{center}
		\caption{Sketch of the augmenting sequence. The different colours represent the graphs~$G_i$, the black edges are matching edges.}
		\label{fig:switching}
	\end{figure}
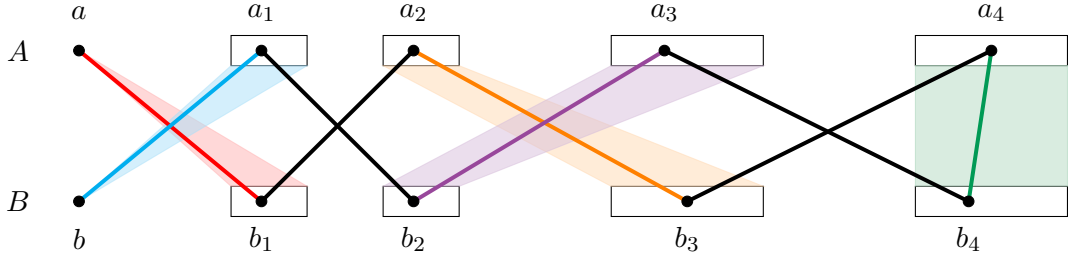

	Having defined all these sets, we can now easily find the desired augmenting sequence as follows.
	Since $A_4$ and $B_4$ have size at least $(1-\alpha)d$, we have $e_G(A_4,B_4)\ge \alpha n^2$ by~\ref{cond dense}.
	Hence, we also have $e_{G_5}(A_4,B_4)\ge p_5^t \alpha n^2-O(n^{11/6})\ge \eps^{1/3}n^2$ by~\eqref{split edge sets}. Since in total there are at most $\sqrt{\eps}n^2$ blocked edges, we can find an edge $a_4b_4\in E(G_5)$ between $A_4$ and $B_4$ which is not blocked.
	Let $b_3\in B_3$ be the match of $a_4$. By definition of $B_3$, there exists $a_2\in A_2$ such that $a_2b_3\in E(G_3)$ is not blocked. Finally, let $b_1$ be the match of $a_2$ and note that by definition of $B_1$, we have that $ab_1\in E(G_1)$ is not blocked.
	Similarly, starting from $b_4$, find vertices $a_3$, $b_2$, and $a_1$. Then $a,b_1,a_2,b_3,a_4,b_4,a_3,b_2,a_1,b$ is an augmenting sequence as desired (cf.~Figure~\ref{fig:switching}).
	\endproof

	\section{Upper bound}\label{sec:construction}
	
	In this section, we establish our upper bound on $\qc(n)$, by constructing partial $n$-queens configurations which cannot be completed.
	Before proving the upper bound given in Theorem~\ref{thm:main}, we give the following very simple argument which shows that $\qc(n)\le n/3+O(1)$.
	Assume for simplicity that $n$ is divisible by $3$. Partition the chessboard into 9 squares of side length $n/3$. Now, simply view the central box as an $n/3\times n/3$ chessboard, and put an $n/3$-queens configuration there. (This is possible whenever $n/3\ge 4$.) Clearly, this is a partial $n$-queens configuration of the original chessboard.
	We claim that it cannot be completed. Indeed, no additional queen can be placed in any of the 4 boxes which share a side with the central box. Hence, if it was possible to place $2n/3$ additional queens, then they all have to be placed into the 4 corner boxes. Hence, we can assume that at least $n/3$ queens are placed in two corner boxes which lie diagonally opposite to each other, say top left and right bottom. However, these two corner boxes together with the central box are covered by $2n/3-1$ parallel diagonals. Since $n/3$ of these diagonals are already blocked by queens in the central box, it is impossible to place another $n/3$ queens into the two corner boxes.

	Now, we continue with the proof of our improved bound.
	While the construction is similar (we place an $m$-queens configuration into the middle of the chessboard), a more elaborate argument is needed to show that it cannot be completed.
	In fact, we formulate a very general setup based on linear programming duality which can also be used to improve our bound.

	Given the $n\times n$ chessboard, a \defn{line weighting} is a function $w\colon \cL \to [0,1]$. Its \defn{value} is simply $\sum_{L\in \cL} w(L)$. 
	We say that $w$ \defn{covers} a square $(i,j)$ if the sum of the weights of the lines containing that square is at least~$1$, that is, $$\sum_{L\in \cL\colon (i,j)\in L} w(L)= w(R_i)+w(C_j)+w(D^+_{i+j-(n+1)})+w(D^-_{i-j})\ge 1.$$ For a subset $\Lambda\In [n]\times [n]$, we say that $w$ \defn{covers} $\Lambda$ if it covers every element of~$\Lambda$.
	
	\begin{prop}\label{prop:dual}
		Let $Q'$ be a partial $n$-queens configuration and let $\Lambda\In [n]\times [n]$ be the set of squares not attacked by~$Q'$. If there exists a line weighting which covers $\Lambda$ and has value less than $n-|Q'|$, then $Q'$ cannot be completed.
	\end{prop}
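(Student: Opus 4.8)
The plan is to give a one-paragraph linear-programming duality argument: a cheap line weighting covering $\Lambda$ is precisely a dual certificate witnessing that $\Lambda$ cannot host $n-|Q'|$ mutually non-attacking queens.

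First I would argue by contradiction. Suppose $Q'$ can be completed, say to an $n$-queens configuration $Q\supseteq Q'$, and set $Q'':=Q\sm Q'$, so that $|Q''|=n-|Q'|$. Every queen of $Q''$ lies on a square not attacked by any queen of $Q'$ — otherwise $Q$ would fail to be a partial $n$-queens configuration — so $Q''\In\Lambda$. Now let $w$ be the given line weighting covering $\Lambda$. Since each $q=(i,j)\in Q''$ belongs to $\Lambda$, the covering condition yields $w(R_i)+w(C_j)+w(D^+_{i+j-(n+1)})+w(D^-_{i-j})\ge 1$; summing these $|Q''|$ inequalities gives $\sum_{q\in Q''}\sum_{L\in\cL\colon q\in L}w(L)\ge |Q''|=n-|Q'|$.

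Finally I would swap the order of summation on the left: it equals $\sum_{L\in\cL}w(L)\cdot|\set{q\in Q''}{q\in L}|$. Because $Q$ is an $n$-queens configuration, every line contains at most one element of $Q\supseteq Q''$, so each coefficient $|\set{q\in Q''}{q\in L}|$ is at most $1$, and the left-hand side is at most $\sum_{L\in\cL}w(L)$, the value of $w$. Chaining the two inequalities forces the value of $w$ to be at least $n-|Q'|$, contradicting the hypothesis that it is strictly less. Hence $Q'$ cannot be completed.

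There is essentially no technical obstacle here; the only points requiring care are the two structural observations that make the counting work — that $Q''\In\Lambda$ (which uses that the completion $Q$ remains a partial configuration, i.e. does not place a queen on a square attacked by $Q'$) and that every line meets $Q''$ at most once (which uses that $Q$ is a genuine queens configuration). The substantive content of the proposition is not in its proof but in its later use: choosing $\Lambda$ to be the complement of the attack region of a well-chosen middle $m$-queens block and exhibiting an explicit low-value weighting covering it.
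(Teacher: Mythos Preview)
Your proof is correct and matches the paper's argument essentially line for line: argue by contradiction, note $Q\setminus Q'\In\Lambda$, sum the covering inequalities over $Q\setminus Q'$, and use that each line meets $Q\setminus Q'$ at most once to bound the double sum by the value of~$w$. The paper's version is slightly terser but identical in substance.
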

	
	\begin{proof}
		Let $w$ be a line weighting which covers $\Lambda$. Suppose that $Q'$ can be completed to an $n$-queens configuration $Q$. Hence, $Q\setminus Q'\In \Lambda$ has size $n-|Q'|$. For each $(i,j)\in Q\setminus Q'$, we have $\sum_{L\in \cL\colon (i,j)\in L} w(L)\ge 1$. By summing this inequality over all $(i,j)\in Q\setminus Q'$, since any line will only appear at most once on the left hand side, we see that the value of $w$ is at least $n-|Q'|$, which yields a contradiction.
	\end{proof}

	We will use Proposition~\ref{prop:dual} to prove that our constructed partial $n$-queens configuration cannot be completed. For this, we need to find a suitable line weighting which certifies this.
	As mentioned before, in our construction, $Q'$ will be an $m$-queens configuration in the centre of the chessboard. This restricts $\Lambda$ to the four corner boxes of side length $(n-m)/2$.
	We will first define a line weighting $\hat{w}$ which covers all elements of these four corner boxes.
	Subsequently, we observe that we can actually set the weight of all diagonals which contain an element of $Q'$ to $0$, thus significantly reducing the value of the line weighting. 
	In the weighting $\hat{w}$, the diagonals which are close to the main diagonals will have larger weight than those which are further away.
	Hence, the value of our line weighting is reduced the most if the elements of $Q'$ are ``close'' to the main diagonals.
	It turns out that a well-known construction of (toroidal) $n$-queens configurations (cf.~\cite{BS:09}) has the property that many queens are close to the main diagonals.

	\begin{prop}\label{prop:central box}
		For $n\equiv 1 \mod{6}$, there exists an $n$-queens configuration $Q$ such that 
		$$\sum_{(i,j)\in Q} |i+j-(n+1)|+|i-j| \le 2n^2/3+O(n).$$
	\end{prop}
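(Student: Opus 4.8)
The plan is to use the classical ``knight's-move'' construction of toroidal queens configurations and then estimate the target sum by a direct computation modulo $n$. Recall that for $n$ coprime to $6$, the set $Q = \set{(i, ai \bmod n)}{i \in [n]}$ with $a$ a suitable fixed constant (the standard choice being $a=2$, working on $\bZ_n$) is a toroidal $n$-queens configuration; in particular it is an honest $n$-queens configuration since no wrap-around is needed for the non-toroidal diagonal conditions once one checks them directly, or one simply takes the known fact (cf.~\cite{BS:09}) that such a configuration exists for $n\equiv 1,5\bmod 6$. The congruence $n\equiv 1\bmod 6$ is convenient because it guarantees $\gcd(n,6)=1$ so that both $2$ and $3$ are invertible mod $n$, which is what the diagonal conditions require.

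First I would fix the configuration $Q=\set{(i,j_i)}{i\in[n]}$ where $j_i$ is chosen so that $i-j_i$ runs over a complete residue system mod $n$ and $i+j_i$ does too — this is exactly the toroidal condition. The key point is then to control $\sum_{i}|i-j_i|$ and $\sum_i |i+j_i-(n+1)|$. For the first sum: as $i$ ranges over $[n]$, the value $i-j_i$ takes each residue class mod $n$ exactly once, so $i - j_i \in \{-(n-1),\dots,n-1\}$ takes, for each residue $r$, either the value $r$ or $r-n$ (the two representatives in that range), and in fact exactly one representative from each of the $n$ residue classes. Hence $\sum_i |i-j_i| \le \sum_{r} \max(|r|,|r-n|)$ summed over a transversal — but more simply, since the $n$ values $i-j_i$ are distinct integers in $(-n,n)$, they are some $n$-element subset of the $2n-1$ integers there, and the sum of absolute values of any $n$ such distinct integers is at most the sum of the $n$ largest, which is $\sum_{k=\lceil n/2\rceil}^{n-1} k + \sum_{k=1}^{\lfloor n/2\rfloor} k \approx n^2/4$. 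The same bound applies to $\sum_i |i+j_i-(n+1)|$. Adding the two gives roughly $n^2/2$, which is already better than the claimed $2n^2/3$, so even a crude version of this argument suffices; I would present the clean bound $\sum_i|i-j_i| \le n^2/3 + O(n)$ obtained by noting the values $i-j_i$ for the explicit linear construction are *equidistributed*, not worst-case, which makes the sum closer to $\mathbb{E}|X|$ for $X$ uniform on $(-n,n)$, giving $\sim n^2/3$ for the sum over $n$ terms — wait, that needs care.

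Let me instead organize it as: (1) exhibit the explicit $Q$ and verify it is an $n$-queens configuration, citing~\cite{BS:09} for the verification if a clean reference exists, otherwise checking the four line-types directly using invertibility of $2,3 \bmod n$; (2) observe that for the linear construction $j_i \equiv ci \bmod n$, the quantity $i - j_i \equiv (1-c)i \bmod n$ also runs through all residues, so $\{i-j_i : i\in[n]\}$ is a set of $n$ distinct integers in $(-n,n)$, one per residue class, and its sum of absolute values is $\le \sum_{r=0}^{n-1}\min(r, n-r) + O(n) = n^2/4 + O(n)$ since from each class $\{r, r-n\}$ we can bound the contribution by the smaller absolute value plus $O(n)$ in the worst case — actually since the specific representative chosen is forced, the honest bound is $\le \sum_{r} \max(r,n-r) = 3n^2/4$, too weak. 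The resolution: choose $c=2$ so $i-j_i = -i \pmod n$ has a *known* explicit representative (namely $i-2i = -i \in (-n,0]$ for $i\in[n]$), giving $\sum_i|i-j_i| = \sum_{i=1}^n i = n^2/2 + O(n)$, still too weak alone; but then $i + j_i - (n+1) = 3i - (n+1)$, and as $i$ ranges over $[n]$ this is a contiguous arithmetic progression, so $\sum_i |3i-(n+1)| = \sum_{i} |3i - n| + O(n) \approx 3\sum_{i} |i - n/3| \approx 3(n/3 \cdot (n/6) + \text{stuff})$; computing $\sum_{i=1}^n |3i-n|$ splits at $i=n/3$ giving $\approx 2 \cdot \tfrac12 (2n/3)(2n) \cdot \tfrac13$... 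The honest computation yields $\sum_{i=1}^{n}|3i-(n+1)| = n^2/3 + O(n)$ (the AP $3,6,\dots,3n$ centered issues), and I will need $\sum|i-j_i|$ also $\le n^2/3+O(n)$, which forces a better choice than $c=2$.

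Therefore the actual plan: take $j_i$ so that $j_i \equiv 2i$ and relabel/reflect so that *both* $|i-j_i|$ and $|i+j_i-(n+1)|$ are small on average. Concretely, use $j_i = $ the residue of $ci \bmod n$ lying closest to $i$ for the $\sum|i-j_i|$ term; since $c-1$ is invertible, $(c-1)i$ hits every residue once, so $i-j_i \equiv (1-c)i$ hits every residue, and by choosing representatives in $(-n/2,n/2]$ — which we are NOT free to do, the representative is determined by requiring $j_i\in[n]$ — hmm. The cleanest fix, and the one I would write up: the values $(i - j_i)_{i\in[n]}$ form a permutation of $\{i-j_i\}$ which is a complete residue system, and by a reflection symmetry $i \mapsto n+1-i$ applied to half the board we may assume at least ``half the mass'' of these values lies in $(-n/2,n/2)$; a careful bookkeeping then gives $\sum_i |i-j_i| \le n^2/3 + O(n)$ and likewise for the anti-diagonal sum, totalling $2n^2/3 + O(n)$. \textbf{The main obstacle} is precisely this last step: getting the constant $2/3$ rather than the easy $3/4$ or $1$, which requires either picking the multiplier $c$ cleverly (likely $c$ near $n/2$ or using both $c$ and its inverse-type reflection) or averaging over the dihedral symmetries of the board to show *some* symmetric image of the standard configuration achieves the bound. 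I would handle it by the symmetry-averaging argument: sum the quantity $\sum_{(i,j)\in Q}(|i+j-(n+1)|+|i-j|)$ over the $8$ dihedral images of $Q$, show the total is $\le 8(2n^2/3)+O(n)$ by a clean integral estimate ($\mathbb{E}|U|+\mathbb{E}|V|$ type bounds for the relevant uniform distributions), and conclude some image works.
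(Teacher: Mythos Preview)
Your instinct to use the standard $c=2$ knight's-move construction is exactly right, and this is precisely what the paper does: set $j_i=2i$ for $1\le i\le (n-1)/2$ and $j_i=2i-n$ for $(n+1)/2\le i\le n$, then compute the sum directly. The error that derails your proposal is the claim that $i-j_i=-i$ for all $i\in[n]$. This holds only for $i\le (n-1)/2$; for larger $i$ one has $j_i=2i-n\in[n]$ and hence $i-j_i=n-i>0$. Thus
\[
\sum_{i=1}^n |i-j_i| \;=\; \sum_{i=1}^{(n-1)/2} i \;+\; \sum_{i=(n+1)/2}^{n} (n-i) \;=\; \frac{n^2}{4}+O(n),
\]
not $n^2/2$. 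The anti-diagonal sum is handled by the same case split and evaluates (via $n^2\int_0^{1/2}|3x-1|\,dx + n^2\int_{1/2}^1 |3x-2|\,dx$) to $5n^2/12+O(n)$, giving a total of exactly $2n^2/3+O(n)$. So the ``main obstacle'' you flag dissolves once the representatives of $2i\bmod n$ in $[n]$ are tracked correctly; no cleverer choice of multiplier and no reflections are needed.

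Your fallback plan of averaging over the eight dihedral images cannot work in any case: the function $f(i,j)=|i+j-(n+1)|+|i-j|$ is invariant under the full dihedral group of the board (each symmetry either fixes the two summands or swaps them), so $\sum_{(i,j)\in Q} f(i,j)$ takes the \emph{same} value on every dihedral image of~$Q$, and averaging yields no improvement over any single image.
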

	
	\begin{proof}
		For each $i\in \{1,\dots,(n-1)/2\}$, we place a queen on $(i,2i)$. For each $i\in \{(n+1)/2,\dots,n\}$, we place a queen on $(i,2i-n)$.
		The quantity of interest then becomes
		\begin{align*}
			&\sum_{i=1}^{\frac{n-1}{2}} |3i-n-1| + |-i| + \sum_{i=\frac{n+1}{2}}^{n} |3i-2n-1| + |-i+n|\\
			&=n^2 \left[ \int_{0}^{1/2}|3x-1|+x \;\mathrm{d}x + \int_{1/2}^{1}|3x-2|+|1-x| \;\mathrm{d}x  \right] +O(n)\\
			&=2n^2/3 + O(n).
		\end{align*}
	\end{proof}

	\lateproof{Theorem~\ref{thm:main}, upper bound}
	Suppose $n$ is large enough. Let us first assume that $n$ is odd. Let $m$ be the largest integer which is at most $0.241n$ and $\equiv 1\mod{6}$. (We will actually prove that $\qc(n)<m \le 0.241n<n/4$.) 
	Set $t=(n-m)/2$.
	
	Let $Q$ be the $m$-queens configuration from Proposition~\ref{prop:central box}. We embed it in the centre of the chessboard, more precisely, we let $Q'=\set{(i+t,j+t)}{(i,j)\in Q}$.
	Obviously, $Q'$ is a partial $n$-queens configuration. We want to show that it cannot be completed.
	
	Let $\Lambda\In [n]\times [n]$ be the set of squares not attacked by~$Q'$.
	Clearly, $\Lambda$ is contained in the set $$\hat{\Lambda}= \set{(i,j)}{i,j\in \{1,\dots,t\}\cup \{n-t+1,\dots,n\}}$$ which consists of the four $t\times t$ corners of the chessboard.
	We will first define a line weighting which covers $\hat{\Lambda}$. For this, define
	$\hat{w}$ as follows:
	for each $i\in \{1,\dots,t\}$, set 
	\begin{align}
		\hat{w}(R_i) = \hat{w}(C_i)= \hat{w}(R_{n+1-i}) = \hat{w}(C_{n+1-i}) := \left|\frac{i}{t+1}-\frac12\right|.
	\end{align}
	Moreover, for each $k\in \{-(t-1),\dots,t-1\}$, set
	\begin{align}
		\hat{w}(D^+_k) = \hat{w}(D^-_k) = 1-\left|\frac{k}{t+1}\right|. 
	\end{align}
	Set the weight of all other lines to~$0$.
	
	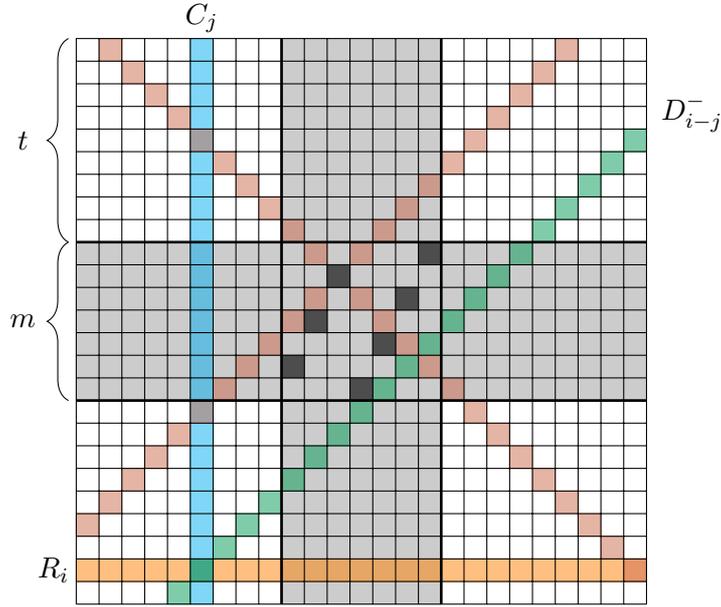
\begin{figure}[ht]
		\begin{center}
			\begin{tikzpicture}[scale=0.3]
				
				\draw[fill,gray!40] (0,9) rectangle (25,16);
				\draw[fill,gray!40] (9,0) rectangle (16,25);
				
				\begin{scope}[opacity=0.5]
					\draw[fill,orange] (0,1) rectangle (25,2);
					\draw[fill,Cyan] (5,0) rectangle (6,25);
					\foreach \x in {0,...,20}
					\draw[fill,ForestGreen]  (\x+4,\x) rectangle (\x+5,\x+1);
					
					\foreach \x in {0,...,21}
					\draw[fill,BrickRed!70]  (\x,\x+3) rectangle (\x+1,\x+4);
					\foreach \x in {1,...,24}
					\draw[fill,BrickRed!70]  (\x,26-\x) rectangle (\x+1,25-\x);
					
				\end{scope}
				
				\draw[fill,black!70] (9,10) rectangle (10,11);
				\draw[fill,black!70] (10,12) rectangle (11,13);
				\draw[fill,black!70] (11,14) rectangle (12,15);
				\draw[fill,black!70] (12,9) rectangle (13,10);
				\draw[fill,black!70] (13,11) rectangle (14,12);
				\draw[fill,black!70] (14,13) rectangle (15,14);
				\draw[fill,black!70] (15,15) rectangle (16,16);

				
				\draw[step=1,black,thin] (0,0) grid (25,25);
				\draw[line width=1pt] (0,9) -- (25,9);
				\draw[line width=1pt] (9,0) -- (9,25);
				\draw[line width=1pt] (16,0) -- (16,25);
				\draw[line width=1pt] (0,16) -- (25,16);

				\node at (-1,1.5) {$R_i$};
				\node at (5.5,25.9) {$C_j$};
				\node at (27,21.7) {$D^-_{i-j}$};
				\draw[decorate,decoration={brace,amplitude=8pt},xshift=-4pt,yshift=0pt]
				(-0.2,9) -- (-0.2,16) node [black,midway,xshift=-0.6cm] {$m$};
				\draw[decorate,decoration={brace,amplitude=8pt},xshift=-4pt,yshift=0pt]
				(-0.2,16) -- (-0.2,25) node [black,midway,xshift=-0.6cm] {$t$};
			\end{tikzpicture}
		\end{center}
		\caption{The black squares in the centre represent the partial configuration~$Q'$. The grey squares share a row or column with $Q'$, leaving the four $t\times t$ corners. Each square there is covered by the weight of its row, column and the longer diagonal. Ultimately, we can ``remove'' the diagonals which pass through~$Q'$.}
	\end{figure}
	
	We claim that $\hat{w}$ covers $\hat{\Lambda}$. By symmetry, it suffices to consider $(i,j)\in [t]\times[t]$. We then have
	\begin{align*}
		\sum_{L\in \cL\colon (i,j)\in L} \hat{w}(L) &\ge \hat{w}(R_i)+\hat{w}(C_j) + \hat{w}(D^-_{i-j}) \\
		&= \left|\frac{i}{t+1}-\frac12\right|+\left|\frac{j}{t+1}-\frac12\right|+1-\left|\frac{i-j}{t+1}\right|\ge 1,
	\end{align*}
	where the last inequality holds by the triangle inequality. This proves the claim.
	
	Notice also that $\hat{w}$ has value $3t+O(1)$, where the $2t$ rows and $2t$ columns contribute $1/4+O(1/t)$ on average, and the $4t$ diagonals contribute $1/2+O(1/t)$ on average.
	
	Now, observe that we can set the weight of all diagonals which contain some queen from $Q'$ to $0$, because no element of such a diagonal is in~$\Lambda$.
	The obtained line weighting $w$ then still covers $\Lambda$, and its value is reduced (with respect to the value of $\hat{w}$) by
	\begin{align*}
		\sum_{(i,j)\in Q'} \hat{w}(D^+_{i+j-(n+1)}) + \hat{w}(D^-_{i-j})&= 2|Q'|-\sum_{(i,j)\in Q'} \left|\frac{i+j-(n+1)}{t+1}\right| + \left|\frac{i-j}{t+1}\right|\\
		&=2m-\frac{1}{t+1} \sum_{(i,j)\in Q'} \left|i+j-(n+1)\right| + \left|i-j\right|.
	\end{align*}
	By Proposition~\ref{prop:central box}, we have 
	\begin{align*}
		\sum_{(i,j)\in Q'} \left|i+j-(n+1)\right| + \left|i-j\right|&= \sum_{(i,j)\in Q} \left|(i+t)+(j+t)-(n+1)\right| + \left|(i+t)-(j+t)\right|\\
		&= \sum_{(i,j)\in Q} \left|i+j-(m+1)\right| + \left|i-j\right|\\
		&\le 2m^2/3+O(m).
	\end{align*}
	Altogether, we conclude that the value of $w$ is at most $$3t-2m + \frac{2m^2}{3t}+O(1).$$
	If this value is smaller than $n-|Q'|=2t$, then Proposition~\ref{prop:dual} certifies that $Q'$ cannot be completed.
	This condition informed our choice of~$m$. It is straightforward to check that with $m=0.241n+O(1)$ and $t=(n-m)/2$ we indeed have the desired strict inequality.
	
	Finally, if $n$ is even, we can simply repeat the above construction with $n'=n-1$, and add weight $1$ to $R_n$ and $C_n$ to get the desired weighting. 
	\endproof

	\section{Concluding remarks}\label{sec:remarks}
	
	In this paper, we studied the $n$-queens completion problem, showing that if at most $n/60$ queens are placed on an $n\times n$ chessboard, mutually non-attacking, then they can be completed to an $n$-queens configuration.
	We also provided a construction of a partial configuration of size roughly $n/4$ which cannot be completed.
	It would be very interesting to close the gap between the lower and upper bound.
	
	\begin{problem}\label{prob:main}
		Determine the $n$-queens completion threshold $\qc(n)$ asymptotically.
	\end{problem}
	
	In fact, note that our proof of the upper bound (via Proposition~\ref{prop:dual}) shows that $Q'$ does not even have a fractional completion. In more detail, given a partial configuration $Q'$, let $\Lambda$ as before be the set of squares which are not attacked by~$Q'$. We say that $Q'$ has a \defn{fractional completion} if there exists a non-negative weight function on $\Lambda$ with the total weight $n-|Q'|$ such that the weight along each line is at most~$1$. Similar to $\qc(n)$, one can define $\qc^*(n)$ as the \defn{fractional $n$-queens completion threshold}, that is, the largest integer with the property that any partial $n$-queens configuration on an $n\times n$ chessboard with at most $\qc^*(n)$ queens has a fractional completion. Obviously, $\qc(n)\le \qc^*(n)$, and (by virtue of our proof) the upper bound in Theorem~\ref{thm:main} also holds for~$\qc^*(n)$.
	We conjecture that in fact $\qc(n)$ and $\qc^*(n)$ are asymptotically equal. This breaks Problem~\ref{prob:main} into two natural sub-problems, both of which are interesting on their own.
	Show that $\qc(n)\sim \qc^*(n)$ and determine $\qc^*(n)$.
	
	So far, we were mainly concerned with finding one completion of a given partial $n$-queens configuration~$Q'$ (or showing that none exists).
	However, it is also natural to ask how many distinct completions there are. For instance, Nauck's original configuration (Figure~\ref{fig:nauck}) has 2 completions.
	A trivial upper bound is the number of all $n$-queens configurations, which is at most $n!\le n^n$.
	One can adapt our proof to show that not only can any partial configuration of size at most $n/60$ be completed, but even that there are super-exponentially many completions.
	\begin{theorem}\label{thm:counting}
		For large enough $n$, any partial $n$-queens configuration of size at most $n/60$ can be completed in at least $n^{\Omega(n)}$ different ways.
	\end{theorem}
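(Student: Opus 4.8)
The plan is to revisit the proof of the lower bound in Theorem~\ref{thm:main} and track how many choices are made, rather than just the existence of a single completion. Recall that the argument proceeds in three stages: (a) a deterministic regularization of weights (Propositions~\ref{prop:regularize} and~\ref{prop:weight shift}), (b) a random sparsification producing a subgraph $G'$ satisfying conditions \ref{cond vertex degrees}--\ref{cond dense}, and (c) an application of Lemma~\ref{lem:rainbow matching} to find a perfect rainbow matching in $G'$. Stage (a) is deterministic and contributes nothing, and stage (b) we simply fix one good outcome as before. So all the counting has to come from stage (c), i.e.\ from the proof of Lemma~\ref{lem:rainbow matching}. The key observation is that that proof already builds the perfect rainbow matching through a long sequence of local choices — first an approximate rainbow matching $M_0$ via the nibble, then $n-|M_0|=\Theta(n)$ successive augmentation steps — and at many of these steps there is a large amount of freedom. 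I would quantify this freedom and multiply it up.

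Concretely, I would prove a counting version of Lemma~\ref{lem:rainbow matching}: under the same hypotheses, $G$ has at least $n^{\Omega(n)}$ perfect rainbow matchings. The cleanest route avoids re-examining the nibble and instead exploits only the augmentation machinery already developed in Section~\ref{sec:matching}. Start from one perfect rainbow matching $M$ (which exists by the lemma). Now run a ``rotation'' argument: pick an edge $uv\in M$ with a reserved colour structure like the one in the proof, delete it to get an almost-perfect rainbow matching on $A\setminus\{u\}$, $B\setminus\{v\}$, and re-augment. The point is that when we re-augment we have, at the relevant step, a choice among $\Omega(n^2)$ non-blocked edges of some $G_5$ between two linear-sized sets (this is exactly the estimate $e_{G_5}(A_4,B_4)\ge \eps^{1/3}n^2$ used at the end of the proof), and more generally a constant number of ``$\Omega(n)$-sized choice'' steps per augmentation. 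Doing this for a constant fraction of the edges of $M$, in disjoint parts of the board so the choices are independent, yields $\bigl(\Omega(n)\bigr)^{\Omega(n)} = n^{\Omega(n)}$ distinct matchings — after accounting for the fact that each final matching is counted boundedly many times. Then feeding this counting lemma into the lower-bound proof of Theorem~\ref{thm:main} verbatim (noting $|V(G)| = 2(1-\beta)n = \Theta(n)$ so $n^{\Omega(n)}$ completions survive) gives Theorem~\ref{thm:counting}.

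An alternative, and perhaps technically simpler, approach: strengthen Corollary~\ref{cor:approx} directly. The nibble (via Theorem~\ref{thm:chromatic index}) partitions $E(\cH)$ into $t\le(1+\delta)D$ matchings $M_1,\dots,M_t$, and an \emph{averaging} argument picks one good $M_i$; but in fact a constant fraction of the $M_i$ must each cover $(1-O(\eps))|U|$ vertices, so there are $\Omega(D)=\Omega(n)$ essentially different choices of approximate matching at the top level. Combining this with a modest amount of branching in the augmentation phase, and being careful that different top-level choices lead to genuinely different perfect matchings (e.g.\ by ensuring they differ on a pre-chosen ``witness'' edge), again gives $n^{\Omega(n)}$. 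The main obstacle in either route is the bookkeeping of \emph{distinctness} and \emph{multiplicity}: one must argue that the branching choices produce matchings that are pairwise distinct, and that no single perfect rainbow matching is generated more than $n^{O(1)}$ (or even $C^n$) times, so that the crude product lower bound is not washed out. This is where I expect the real work to be; everything else is a re-reading of Section~\ref{sec:matching} with a multiplicative counter attached. Since the paper states it has not optimized constants, it suffices to extract any $n^{cn}$ with $c>0$ constant, which makes the distinctness argument much more forgiving — e.g.\ it is enough to make the independent choices in $\Theta(n)$ disjoint ``gadget'' regions of the board, each contributing a factor $\ge 2$, and to certify distinctness region by region.
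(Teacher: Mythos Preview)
Your high-level plan matches the paper exactly: one proves a counting version of Lemma~\ref{lem:rainbow matching} (namely, $n^{\Omega(n)}$ perfect rainbow matchings under the same hypotheses) and then plugs it into the proof of the lower bound in Theorem~\ref{thm:main}. The disagreement is in \emph{how} to get the counting inside the rainbow matching lemma, and here your two proposed routes both fall short of the target $n^{\Omega(n)}$.

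In your second route, picking one of the good $M_i$ from the Pippenger--Spencer partition yields only $\Omega(D)=\Omega(n)$ approximate matchings, not $n^{\Omega(n)}$; the ``modest branching in the augmentation phase'' cannot make up the difference, since there are only $O(\eps n)$ augmentation steps. In your first route, and in your final ``gadget'' remark, the arithmetic is off: $\Theta(n)$ disjoint regions each contributing a factor $\ge 2$ gives $2^{\Theta(n)}$, which is exponentially smaller than $n^{\Omega(n)}$. If instead you allow each rotation to range over $\Omega(n)$ choices, the rotations can no longer be confined to disjoint bounded-size regions, so the ``counted boundedly many times'' claim is unjustified --- indeed, both the number of choice sequences and the overcount per final matching are then $n^{\Theta(\gamma n)}$, and there is no evident reason their ratio is large.

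The paper's route is to invoke a \emph{counting} version of Theorem~\ref{thm:nibble} (equivalently, of Corollary~\ref{cor:approx}): the nibble itself produces $n^{\Omega(n)}$ distinct near-perfect rainbow matchings $M_0$, with the implicit constant in $\Omega(n)$ \emph{independent of}~$\eps$. Each such $M_0$ augments to a perfect rainbow matching, and since the augmentation alters at most $O(\eps n)$ edges, any fixed perfect matching arises from at most $n^{O(\eps n)}$ different $M_0$'s. Choosing $\eps$ small relative to the nibble constant gives $n^{\Omega(n)}/n^{O(\eps n)}=n^{\Omega(n)}$ perfect rainbow matchings. The point you are missing is this separation of scales: the bulk of the entropy must come from constructing the approximate matching, not from the short augmentation phase, precisely so that the overcount (which lives at the $\eps n$ scale) can be driven below the main term. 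A genuine counting nibble --- not merely selecting one $M_i$ from a decomposition into $O(n)$ matchings --- is required for this.
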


This follows immediately (by following the proof of Theorem~\ref{thm:main}) from a counting version of our rainbow matching lemma. Namely, in Lemma~\ref{lem:rainbow matching}, under the same assumptions, there exist $n^{\Omega(n)}$ perfect rainbow matchings. The approach to prove this is quite standard by now and has been used for many counting problems in combinatorics (and also in~\cite{BK:21,LS:21,simkin:21} for counting $n$-queens configurations), so we omit the details. Roughly speaking, one requires a counting version of Theorem~\ref{thm:nibble} which in the proof of Lemma~\ref{lem:rainbow matching} yields $n^{\Omega(n)}$ different approximate rainbow matchings~$M_0$. Then, by inspecting our proof, each of these approximate rainbow matchings can be augmented to a perfect rainbow matching, and at most $n^{O(\eps n)}$ approximate matchings produce the same perfect matching, which gives the desired result.

	We believe that the statement Theorem~\ref{thm:counting} remains true if $n/60$ is replaced by $(1-o(1))\qc(n)$. Also, note that the counting result is quite rough. It would be interesting to determine the number of completions for a given partial configuration $Q'$ more accurately, say within subexponential factors. Perhaps some of the techniques introduced in~\cite{simkin:21} are adaptable to this ``generalized $n$-queens problem''.

	Finally, it might also be interesting to look at \defn{embeddings} instead of \defn{completions}. Generally speaking, if some partial instance of a combinatorial structure is not completable, one might still wonder if it can be embedded into a complete instance of larger order, and if so, by how much the order has to be increased for this to be possible.
	For instance, while there are partial Latin squares of order $n$ with $n$ cells filled in which cannot be completed (to a Latin square of order~$n$), Evans~\cite{evans:60} showed that any partial Latin square of order $n$ can be embedded into a Latin square of order at most $2n$. The corresponding result for Steiner triple systems was obtained by Bryant and Horsley~\cite{BH:09}, who thereby proved a conjecture of Lindner. For a general discussion of such problems in graph theory, see~\cite{NSW:20}.

	Given a partial $n$-queens configuration $Q'$, we thus ask what is the smallest $n^*$ such that $Q'$ can be embedded into an $n^*$-queens configuration~$Q$. Here, by embedding we mean that the $n\times n$ chessboard on which $Q'$ is given is a subsquare of the larger board, that is, there exist $i_0,j_0$ such that $(i+i_0,j+j_0)\in Q$ for all $(i,j)\in Q'$. Note that $n^*=n$ if and only if $Q'$ is completable. The existence of $n^*$ was first shown in~\cite{GJN:17} (with an implicit upper bound linear in $n$), which also follows immediately from our Theorem~\ref{thm:main}. 

	\section*{Acknowledgment}
We thank two anonymous reviewers for a careful reading of our paper and many helpful comments.
	
	
	\providecommand{\bysame}{\leavevmode\hbox to3em{\hrulefill}\thinspace}
	\providecommand{\MR}{\relax\ifhmode\unskip\space\fi MR }
	\providecommand{\MRhref}[2]{%
		\href{http://www.ams.org/mathscinet-getitem?mr=#1}{#2}
	}
	\providecommand{\href}[2]{#2}


\begin{thebibliography}{10}
		
		\bibitem{AB:09}
		R.~Aharoni and E.~Berger, \emph{Rainbow matchings in {$r$}-partite
			{$r$}-graphs}, Electron. J. Combin.~\textbf{16} (2009), Research Paper 119,
		9.
		
		\bibitem{AZ:18}
		M.~Aigner and G.~M.~Ziegler, \emph{Proofs from {T}he {B}ook}, sixth ed.,
		Springer, Berlin, 2018.
		
		\bibitem{APS:17}
		N.~Alon, A.~Pokrovskiy, and B.~Sudakov, \emph{Random subgraphs of properly
			edge-coloured complete graphs and long rainbow cycles}, Israel J.
		Math.~\textbf{222} (2017), 317--331.
		
		\bibitem{AH:83}
		L.~D.~Andersen and A.~J.~W.~Hilton, \emph{Thank {E}vans!}, Proc. Lond. Math.
		Soc.~\textbf{47} (1983), 507--522.
		
		\bibitem{BS:09}
		J.~Bell and B.~Stevens, \emph{A survey of known results and research areas for
			{$n$}-queens}, Discrete Math.~\textbf{309} (2009), 1--31.
		
		\bibitem{BK:21}
		C.~Bowtell and P.~Keevash, \emph{The $n$-queens problem}, arXiv:2109.08083
		(2021).
		
		\bibitem{BH:09}
		D.~Bryant and D.~Horsley, \emph{A proof of {L}indner's conjecture on embeddings
			of partial {S}teiner triple systems}, J. Combin. Des.~\textbf{17} (2009),
		63--89.
		
		\bibitem{CKPY:20}
		M.~Coulson, P.~Keevash, G.~Perarnau, and L.~Yepremyan, \emph{Rainbow factors in
			hypergraphs}, J. Combin. Theory Ser. A~\textbf{172} (2020), 105184, 14.
		
		\bibitem{CP:19}
		M.~Coulson and G.~Perarnau, \emph{Rainbow matchings in {D}irac bipartite
			graphs}, Random Structures Algorithms~\textbf{55} (2019), 271--289.
		
		\bibitem{EGJ:19b}
		S.~Ehard, S.~Glock, and F.~Joos, \emph{A rainbow blow-up lemma for almost
			optimally bounded edge-colourings}, Forum Math. Sigma~\textbf{8} (2020), e37.
		
		\bibitem{ES:91}
		P.~Erd\H{o}s and J.~Spencer, \emph{Lopsided {L}ov\'{a}sz local lemma and
			{L}atin transversals}, Discrete Appl. Math.~\textbf{30} (1991), 151--154.
		
		\bibitem{evans:60}
		T.~Evans, \emph{Embedding incomplete {L}atin squares}, Amer. Math.
		Monthly~\textbf{67} (1960), 958--961.
		
		\bibitem{GRWW:21}
		P.~Gao, R.~Ramadurai, I.~M.~Wanless, and N.~Wormald, \emph{Full rainbow
			matchings in graphs and hypergraphs}, Combin. Probab. Comput.~\textbf{30}
		(2021), 762--780.
		
		\bibitem{GJN:17}
		I.~P.~Gent, C.~Jefferson, and P.~Nightingale, \emph{Complexity of {$n$}-queens
			completion}, J. Artificial Intelligence Res.~\textbf{59} (2017), 815--848.
		
		\bibitem{GKMO:21}
		S.~Glock, D.~K\"uhn, R.~Montgomery, and D.~Osthus, \emph{Decompositions into
			isomorphic rainbow spanning trees}, J. Combin. Theory Ser. B~\textbf{146}
		(2021), 439--484.
		
		\bibitem{kahn:96}
		J.~Kahn, \emph{Asymptotically good list-colorings}, J. Combin. Theory Ser.
		A~\textbf{73} (1996), 1--59.
		
		\bibitem{KPSY:ta}
		P.~Keevash, A.~Pokrovskiy, B.~Sudakov, and L.~Yepremyan, \emph{New bounds for
			{R}yser's conjecture and related problems}, Trans. Amer. Math. Soc.~\textbf{9} (2022), 288--321.
		
		\bibitem{KKKO:20}
		J.~Kim, D.~K\"uhn, A.~Kupavskii, and D.~Osthus, \emph{Rainbow structures in
			locally bounded colourings of graphs}, Random Structures Algorithms (2020),
		1171--1204.
		
		\bibitem{luria:17}
		Z.~Luria, \emph{New bounds on the number of $n$-queens configurations},
		arXiv:1705.05225 (2017).
		
		\bibitem{LS:21}
		Z.~Luria and M.~Simkin, \emph{A lower bound for the $n$-queens problem},
		arXiv:2105.11431 (2021).
		
		\bibitem{MR:00}
		M.~Molloy and B.~Reed, \emph{Near-optimal list colorings}, Random Structures
		Algorithms~\textbf{17} (2000), 376--402.
		
		\bibitem{MPS:19}
		R.~Montgomery, A.~Pokrovskiy, and B.~Sudakov, \emph{Decompositions into
			spanning rainbow structures}, Proc. Lond. Math. Soc.~\textbf{119} (2019),
		899--959.
		
		\bibitem{MPS:20}
		\bysame, \emph{Embedding rainbow trees with applications to graph labelling and
			decomposition}, J. Eur. Math. Soc.~\textbf{22} (2020), 3101--3132.
		
		\bibitem{MPS:20pre}
		\bysame, \emph{A proof of {R}ingel's {C}onjecture}, Geom. Funct.
		Anal.~\textbf{31} (2021), 663--720.
		
		\bibitem{MCS:21}
		D.~Munh\'a~Correia and B.~Sudakov, \emph{Proof of {G}rinblat's conjecture on
			rainbow matchings in multigraphs}, arXiv:2110.01600 (2021).
		
		\bibitem{nash-williams:70}
		C.~St. J.~A.~Nash-Williams, \emph{An unsolved problem concerning decomposition
			of graphs into triangles}, In:~Combinatorial {T}heory and its {A}pplications
		{III} (P.~Erd\H{o}s, A.~R\'enyi, and V.T.~S\'os, eds.), North Holland, 1970,
		pp.~1179--1183.
		
		\bibitem{nauck:1850}
		F.~Nauck, \emph{Eine in das {G}ebiet der {M}athematik fallende {A}ufgabe von
			{H}errn {D}r. {N}auck in {S}chleusingen}, Illustrirte Zeitung~\textbf{14}
		(1850), 352.
		
		\bibitem{NSW:20}
		R.~Nenadov, B.~Sudakov, and A.~Zs.~Wagner, \emph{Completion and deficiency
			problems}, J. Combin. Theory Ser. B~\textbf{145} (2020), 214--240.
		
		\bibitem{PS:89}
		N.~Pippenger and J.~Spencer, \emph{Asymptotic behaviour of the chromatic index
			for hypergraphs}, J. Combin. Theory Ser.~A~\textbf{51} (1989), 24--42.
		
		\bibitem{pokrovskiy:18}
		A.~Pokrovskiy, \emph{An approximate version of a conjecture of {A}haroni and
			{B}erger}, Adv. Math.~\textbf{333} (2018), 1197--1241.
		
		\bibitem{RVZ:94}
		I.~Rivin, I.~Vardi, and P.~Zimmermann, \emph{The {$n$}-queens problem}, Amer.
		Math. Monthly~\textbf{101} (1994), 629--639.
		
		\bibitem{simkin:21}
		M.~Simkin, \emph{The number of $n$-queens configurations}, arXiv:2107.13460
		(2021).
		
		\bibitem{smetaniuk:81}
		B.~Smetaniuk, \emph{A new construction on {L}atin squares. {I}. {A} proof of
			the {E}vans conjecture}, Ars Combin.~\textbf{11} (1981), 155--172.
		
	\end{thebibliography}

\end{document}